\documentclass[12pt,a4paper,reqno]{amsart}
\usepackage{amsmath,amssymb,amsthm}
\usepackage[T1]{fontenc}
\usepackage[utf8]{inputenc}
\usepackage[english]{babel}
\usepackage{enumerate}

\advance\textwidth30mm \advance\hoffset-14mm
\advance\textheight30mm \advance\voffset-18mm
\newtheorem{thm}{Theorem}
\newtheorem{lem}{Lemma}
\newtheorem{cor}{Corollary}

\theoremstyle{remark}
\newtheorem{rem}{Remark}
\theoremstyle{definition}

\begin{document}

\title[Parametric Families of Minimal Positive Cubature Formulas]{Parametric Families of Minimal Positive Cubature Formulas of Degrees 4
and 6 on the 2-Sphere}
\author{D.\,V.~Gorbachev}
\address{Saint Petersburg State University}
\email{dvgmail@mail.ru}
\date{}

\begin{abstract}
We study minimal positive cubature formulas on the two-dimensional sphere of
degrees four and six, with ten and eighteen nodes, respectively. We prove the
regularity of Popov's symmetric formulas and establish that their local families
of deformations are two-dimensional, up to rotations. We obtain algebraic
relations between the weights and pairwise inner products of the nodes. For
degree four, we construct a one-parameter family with half-turn symmetry
connecting the formulas of Popov and Heo--Xu. We also give a self-contained
proof that eighteen nodes are minimal for degree six.
\end{abstract}

\keywords{positive cubature formula, parametric spherical design,
minimum number of nodes, moment map, local deformation,
reproducing kernel}

\subjclass[2020]{Primary 65D32; Secondary 05B30, 33C55}

\maketitle

\section{Introduction}

\subsection{Positive cubature formulas}
Let $\mathbb S^2\subset\mathbb R^3$ be the unit sphere, $\sigma$ the
normalized surface measure on it, and $\Pi_t(\mathbb S^2)$ the space of
spherical polynomials of degree at most $t\in\mathbb Z_{\ge0}$, with
$D_{t}=\dim\Pi_t(\mathbb S^2)=(t+1)^2$.

A positive cubature formula of degree $t$, or equivalently a positive
weighted spherical $t$-design, is a pair $(X,\Lambda)$, where
$X=\{x_1,\dots,x_N\}\subset\mathbb S^2$ is a set of distinct nodes and
$\Lambda=(\lambda_1,\dots,\lambda_N)\in\mathbb R_{>0}^N$ is a vector of weights
satisfying
\begin{equation}\label{eq:def}
\sum_{j=1}^N\lambda_jp(x_j)= \int_{\mathbb S^2}p\,d\sigma, \quad
\forall\,p\in\Pi_t(\mathbb S^2).
\end{equation}
We call a cubature formula minimal if its number of nodes is the smallest
among all positive formulas of degree $t$. We denote this minimum by
$N_{\mathrm{wt}}(t)$. Formulas obtained from one another by rotating the sphere
are naturally regarded as equivalent.

The notion of a spherical design was introduced by Delsarte, Goethals, and
Seidel~\cite{DelsarteGoethalsSeidel} by analogy with classical combinatorial
designs. Their Fisher-type lower bound for the number of nodes remains valid
for arbitrary positive weights: on $\mathbb S^2$ it takes the form
$N\ge(m+1)^2$ for $t=2m$ and $N\ge(m+1)(m+2)$ for $t=2m+1$.

In~\cite{BondarenkoGorbachev}, the existence of a two-parameter family of
minimal positive cubature formulas of degree $4$ with $10$ nodes was noted.
The authors stated that the pairwise inner products $u=x_i\cdot x_j$,
$i\ne j$, and the corresponding weights satisfy the relation
\[
225u^{4}+180u^{3}-54u^{2}-36u-315
+36(\lambda_i^{-1}+\lambda_j^{-1})
-4(\lambda_i\lambda_j)^{-1}=0.
\]
This relation was found by extensive computer calculations, including the
determination of minimal polynomials of algebraic numbers. No theoretical
derivation was given in~\cite{BondarenkoGorbachev}. In the present paper we
fill this gap.

We have $N_{\mathrm{wt}}(4)=10$. The equality $N_{\mathrm{wt}}(6)=18$ was
recently announced by Cheng and Yu~\cite{ChengYu}. For completeness, we give
a short independent proof of this fact below.

To state our main results, we need two symmetric formulas of
Popov~\cite{Popov}, which will serve as the starting points of our
deformations. For an orthogonal transformation $Q\in O(3)$ of finite order,
we denote the orbit of $x\in\mathbb S^2$ by
\[
\mathcal O_Q(x)=\{Q^k x\colon k=0,\ldots,\operatorname{ord}(Q)-1\}.
\]
For $\lambda>0$, we write $\mathcal O_Q(x;\lambda)$ for the weighted orbit
$\mathcal O_Q(x)$ in which each node has weight $\lambda$.
Consider cylindrical coordinates on the sphere:
\[
x(h,\varphi)= \bigl(\sqrt{1-h^2}\cos\varphi, \sqrt{1-h^2}\sin\varphi,h\bigr),
\]
where $h\in[-1,1]$ is the height and $\varphi\in[0,2\pi)$ the azimuth.
Let $G\in O(3)$ be the orthogonal transformation of order $8$ defined by
\[
Gx(h,\varphi)= x\Bigl(-h,\varphi+\frac{\pi}{4}\Bigr).
\]
For $0<|h|<1$, the orbit $\mathcal O_G(x(h,\varphi))$ consists of the vertices
of a square antiprism at heights $\pm h$; when $h=0$, it is a regular octagon
on the equator. Let $G_0(\xi_1,\xi_2,\xi_3)=(\xi_1,\xi_2,-\xi_3)$ denote
reflection in the equatorial plane. Then $\mathcal O_{G_0}(e_3)=\{\pm e_3\}$.

For $t=4$, Popov's formula is
\[
(X_4^{\mathrm P},\Lambda_4^{\mathrm P})= \mathcal
O_{G_0}\Bigl(e_3;\frac1{12}\Bigr) \cup \mathcal
O_G\Bigl(x\Bigl(\frac1{\sqrt5},0\Bigr);\frac5{48}\Bigr).
\]
For $t=6$, Popov's formula is
\[
(X_6^{\mathrm P},\Lambda_6^{\mathrm P})= \mathcal
O_{G_0}\Bigl(e_3;\frac2{45}\Bigr) \cup \mathcal
O_G\bigl(x(h_+,0);\lambda_+\bigr) \cup \mathcal
O_G\Bigl(x\Bigl(h_-,\frac{\pi}{4}\Bigr);\lambda_-\Bigr),
\]
where
\begin{equation}\label{h-lambda}
h_{\pm}=\sqrt{\frac{13\pm2\sqrt{30}}{49}}, \quad
\lambda_\pm=\frac{410\pm\sqrt{30}}{7200}.
\end{equation}

At the poles, the transformations $G$ and $G_0$ agree, while on each
antiprism $G$ cyclically permutes the nodes without changing their weights.
Thus both Popov formulas are invariant under $G$.

Another minimal $10$-node weighted $4$-design was constructed by Heo and
Xu~\cite{HeoXu}. The configuration has a common polar axis and consists of one pole, a square,
and a pentagon, with the latter two at different heights relative to that
axis. The pole and one vertex of the pentagon have weight $1/9$.

\subsection{Main result}
Fix a sequence of real spherical harmonics $Y_1,Y_2,\ldots,$ that is
orthonormal in $L^2(\mathbb S^2,\sigma)$ and ordered by degree so that, for
every $t\ge0$, the harmonics $Y_1,\ldots,Y_{D_t}$ form an orthonormal basis of
$\Pi_t(\mathbb S^2)$. We write
\[
K_{t}(x\cdot y)=\sum_{j=1}^{D_{t}}Y_{j}(x)Y_{j}(y),\quad K_{t}(1)=D_{t},
\]
for the reproducing kernel of $\Pi_t(\mathbb S^2)$.

Consider the moment map $\Phi_t\colon (\mathbb S^2)^N\times\mathbb
R_{>0}^N\to\mathbb R^{D_t}$:
\[
\Phi_t(X,\Lambda)= \biggl(\sum_{j=1}^N\lambda_jY_k(x_j) -\int_{\mathbb
S^2}Y_k\,d\sigma\biggr)_{k=1}^{D_t}.
\]
Then condition~\eqref{eq:def} is equivalent to the equation
$\Phi_t(X,\Lambda)=0$. If $(X_0,\Lambda_0)$ is a solution of this equation
and the differential $\mathrm d\Phi_t(X_0,\Lambda_0)$ has full rank $D_t$,
then, by the implicit function theorem, the solution set is locally a
real-analytic manifold of dimension ${3N-D_t}$.

\begin{thm}\label{thm:main-families}
For $t=4$ and $6$, Popov's formula
$(X_t^{\mathrm P},\Lambda_t^{\mathrm P})$ is a regular point of the map~$\Phi_t$:
\[
\operatorname{rank}\mathrm d\Phi_t(X_t^{\mathrm P},\Lambda_t^{\mathrm P})=D_t.
\]
Up to equivalence under rotations, the local dimension of the set of positive
cubature formulas of degree $t$ near Popov's formula is $2$.
\end{thm}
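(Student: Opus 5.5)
Our plan is to reduce the rank statement to the nondegeneracy of an explicit Gram-type matrix built from the reproducing kernel $K_t$, and then to exploit the $G$-symmetry to split this matrix into small blocks that can be checked by hand.

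\textbf{Step 1: structure of the differential.} At a point $(X,\Lambda)$, the differential $\mathrm d\Phi_t$ acts on a tangent vector $(v_1,\dots,v_N,\mu_1,\dots,\mu_N)$, with $v_j\perp x_j$, by
\[
(v,\mu)\mapsto\Bigl(\sum_j\mu_jY_k(x_j)+\lambda_j\nabla Y_k(x_j)\cdot v_j\Bigr)_{k=1}^{D_t}.
\]
Thus $\mathrm d\Phi_t$ fails to be surjective exactly when some nonzero $p\in\Pi_t(\mathbb S^2)$ satisfies $p(x_j)=0$ and $\nabla_{\mathbb S^2}p(x_j)=0$ for all $j$. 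We therefore have to show that no nonzero polynomial of degree $\le t$ vanishes to second order at every node. Equivalently, the $3N\times D_t$ evaluation matrix of the functionals $p\mapsto p(x_j)$ and $p\mapsto \partial_{e}p(x_j)$, where $e$ runs over a tangent frame at $x_j$, has rank $D_t$. This rank equals the rank of the $3N\times3N$ kernel matrix formed from $K_t(x_i\cdot x_j)$ and its first and mixed tangential derivatives, which are explicit polynomials in the inner products.

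\textbf{Step 2: symmetry reduction.} Both Popov formulas are invariant under the cyclic group $\langle G\rangle\cong\mathbb Z_8$, so $\Pi_t$ decomposes into isotypic components indexed by characters $\omega^k$, $\omega=e^{\pi i/4}$. We take complex harmonics $P_\ell^m(h)e^{im\varphi}$. On such a harmonic, $G$ acts as multiplication by $(-1)^{\ell+m}e^{im\pi/4}$, so the characters are easy to compute. The node set is a union of $G$-orbits: the poles form one orbit of size 2 on which $G$ acts as $G_0$, and each antiprism is a regular orbit of size 8. The vanishing conditions therefore split into blocks, one per character.

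For $t=4$, each character block involves one or two harmonics on one side and, on the other, the conditions coming from one point of the antiprism (value plus two derivatives), together with the pole conditions in the characters $\pm1$. For $t=6$ there are two antiprisms, hence up to six conditions per character against the harmonics of that character among the $49$. In each block we check that the small determinant is nonzero at $h=1/\sqrt5$, respectively at $h_\pm$ from~\eqref{h-lambda}. The entries are algebraic numbers in $\mathbb Q(\sqrt2,\sqrt5)$ or $\mathbb Q(\sqrt2,\sqrt{30})$, so an exact computation is possible. This is where the paper's $G$-invariance pays off, and it is the \textbf{main obstacle}, especially for $t=6$: the relevant blocks are up to $6\times7$ and need careful bookkeeping of which harmonics fall into which character. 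If we first quotient by the extra symmetries (the reflections $\varphi\mapsto-\varphi$ preserving the configuration), each block splits further into real even and odd parts, which we expect to make the determinants manageable.

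\textbf{Step 3: dimension count.} Once full rank is established, the implicit function theorem shows that the solution set near Popov's formula is a real-analytic manifold of dimension $3N-D_t$. For $t=4$ this is $30-25=5$, and for $t=6$ it is $54-49=5$. Positivity of the weights and distinctness of the nodes are open conditions, so they persist nearby.

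$SO(3)$ acts on this manifold. The stabilizer of Popov's formula is finite: the rotation subgroup of the symmetry group preserves the polar axis and is discrete, since a finite node set off the axis admits only finitely many rotations. Hence orbits near this point are 3-dimensional, and the action is locally free on a neighborhood. The quotient therefore has local dimension $5-3=2$, as claimed.
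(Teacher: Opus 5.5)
Your rank argument is the same as the paper's: the criterion of Lemma~\ref{lem:rankcriterion}, splitting $\Pi_t(\mathbb S^2)$ into the $G$-eigenspaces $E_r$, reducing the conditions to one representative per orbit, and checking one determinant per block. The paper uses the non-orthogonal basis $f_{m,k}=(1-h^2)^{|m|/2}h^ke^{im\varphi}$ rather than Legendre harmonics. In that basis $\partial_h$ and $\partial_\varphi$ act on each basis element by a scalar factor, which is what makes the explicit determinant tables feasible.

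The nonvanishing of these determinants, which you defer, is the entire content of the regularity claim. Your description of the blocks is also inaccurate in several places:
\begin{itemize}
\item For $t=4$ one has $\dim E_r=3$ for $r\ne4$ and $\dim E_4=4$, not ``one or two harmonics''. So the systems are square, not overdetermined.
\item For $t=4$, $r=0$, the $\partial_\varphi$ condition is vacuous, since all basis elements have $m=0$. For $r=4$ the antiprism gives only three conditions for four unknowns. In both blocks the pole value $q(e_3)=0$ is therefore indispensable.
\item For $t=6$ one has $\dim E_r=6$ except $\dim E_4=7$. At $r=0$ the two $\partial_\varphi$ rows are proportional, so the pole row must replace one of them.
\item The reflection $\varphi\mapsto-\varphi$ conjugates $G$ to $G^{-1}$ and hence maps $E_r$ to $E_{-r}$. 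It splits only $E_0$ and $E_4$. Otherwise it merely pairs the blocks $r$ and $8-r$, consistent with the paper's tables, where those determinants agree up to sign.
\end{itemize}

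For the dimension count you take a genuinely different route.
\begin{itemize}
\item You use that $SO(3)$ acts freely on labelled configurations in the $5$-dimensional regular solution manifold, and apply the slice theorem to obtain a $2$-dimensional quotient. This is correct and needs only the rank statement.
\item The paper instead proves the stronger fact that the rank stays $D_t$ when both poles are fixed; this is why it never uses the gradient conditions at the poles. The one-dimensional kernel is then the rotation about $e_3$, which the paper removes by imposing $\varphi_3=0$. It then applies the implicit function theorem with the position $(\tau_1,\tau_2)$ of the second pole as parameters.
\end{itemize}
The paper's route yields an explicit slice, which is used afterwards to show that nearby formulas with $(\tau_1,\tau_2)\ne(0,0)$ contain no antipodal pairs. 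Your route gives the dimension more cheaply but provides no such chart.
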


This theorem establishes the existence of local two-dimensional families,
but we do not obtain a complete explicit description of these families.
For $t=4$, however, we can identify within such a family an explicit
one-parameter branch with the additional symmetry of the half-turn
$R\in SO(3)$,
\[
R(\xi_1,\xi_2,\xi_3)=(-\xi_1,-\xi_2,\xi_3),
\]
connecting the formulas of Popov and Heo--Xu.

\begin{cor}\label{cor:one-parameter-4}
There exists a one-parameter family of minimal positive cubature formulas of
degree $4$ with $10$ nodes, invariant under the half-turn $R$, with parameter
\[
0\le a\le a'=\frac{(4-\sqrt6)^{3/2}}{125}.
\]
Each formula in this family has the form
\[
(X(a),\Lambda(a))=
\bigcup_{i=1}^5\mathcal O_R\bigl(x(h_i,\varphi_i);\lambda_i\bigr),\quad
\lambda_i=\frac{1}{30h_i^4-24h_i^2+12},
\]
where the heights $h_1,\ldots,h_5$ are the roots, counted with multiplicity,
of the polynomial
\[
\omega_a(u)=u^5+\frac{25}{2}\,au^4-\frac45\,u^3-8au^2+\frac{2}{25}\,u+a.
\]
The azimuths $\varphi_i$ can be chosen so that
\[
\sum_{i=1}^5\lambda_i(1-h_i^2)h_i^k e^{2i\varphi_i}=0,\quad k=0,1,2,\quad
\sum_{i=1}^5\lambda_i(1-h_i^2)^2e^{4i\varphi_i}=0.
\]
For $a=0$, we obtain a formula equivalent to Popov's formula, while for
$a=a'$ we obtain a formula equivalent to that of Heo--Xu.
\end{cor}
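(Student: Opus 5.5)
We would work directly with the $R$-invariant ansatz. The idea is to reduce the moment equations $\Phi_4=0$ to a one-variable quadrature problem in the height $h$, together with a small set of complex equations for the azimuths. For $x=x(h,\varphi)$ we have $Rx=x(h,\varphi+\pi)$. Expand $\Pi_4(\mathbb S^2)$ in the basis $h^k(1-h^2)^{|m|/2}e^{im\varphi}$ with $|m|+k\le4$. Every term with odd $m$ then cancels on each orbit $\mathcal O_R$. The conditions~\eqref{eq:def} for $(X,\Lambda)=\bigcup_{i}\mathcal O_R(x(h_i,\varphi_i);\lambda_i)$ therefore split into three groups:
\begin{enumerate}[(i)]
\item for $m=0$: $2\sum_i\lambda_ih_i^k=\frac12\int_{-1}^1h^k\,dh$ for $k=0,\dots,4$, i.e.\ a $5$-node quadrature in $h$ exact for degree $4$;
\item for $m=2$: $\sum_i\lambda_i(1-h_i^2)h_i^ke^{2i\varphi_i}=0$ for $k=0,1,2$;
\item for $m=4$: $\sum_i\lambda_i(1-h_i^2)^2e^{4i\varphi_i}=0$.
\end{enumerate}
Groups (ii) and (iii) are exactly the displayed azimuth conditions. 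A count of unknowns gives $15$ unknowns minus $1$ for rotation about the axis, against $13$ real equations. So we expect a one-parameter family, consistent with Theorem~\ref{thm:main-families}.

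Next, we would solve (ii) linearly. Set $\omega(u)=\prod_i(u-h_i)$ and $c_i=\lambda_i(1-h_i^2)$. The divided-difference identity $\sum_if(h_i)/\omega'(h_i)=0$ for $\deg f\le3$ shows that the solutions of (ii) are exactly
\[
z_i:=e^{2i\varphi_i}=\frac{\alpha+\beta h_i}{c_i\,\omega'(h_i)},\qquad \alpha,\beta\in\mathbb C .
\]
The requirement $|z_i|=1$ becomes $|\alpha+\beta h_i|^2=c_i^2\omega'(h_i)^2$, a real identity that must hold at all five nodes. From (i), the weights are determined by the nodes through the Lagrange interpolation formula $2\lambda_i=\frac12\int_{-1}^1\frac{\omega(h)}{(h-h_i)\omega'(h_i)}\,dh$.

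The key algebraic step is to show that these constraints together with (iii) force two things. First, $\omega$ must lie in the pencil $\omega_a$; this will come from matching the degree-$2$ polynomial $|\alpha+\beta h|^2$ with $c(h)^2\omega'(h)^2$ modulo $\omega$. Second, the Lagrange weights must reduce to $\lambda(h)=1/(30h^4-24h^2+12)$ at the roots. We would first verify directly that, for $\omega=\omega_a$, the value $\lambda(h_i)$ coincides with the interpolatory weight; this is a polynomial identity modulo $\omega_a$, checked via resultants or a reduction of $\omega_a'$. We would then exhibit explicit $\alpha,\beta$ depending on $a$ for which $|z_i|=1$ holds. Finally, (iii) must be checked; we expect it to reduce, after substituting $z_i^2$, to a symmetric-function identity in the roots of $\omega_a$. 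This compatibility of (iii) with the unit-modulus conditions is the step I expect to be the main obstacle. I would attack it by writing everything as polynomials in $u$ reduced modulo $\omega_a(u)$ and checking that the remainders vanish identically in $a$. If that fails, the fallback is to let (iii) fix the ratio $\alpha:\beta$ and then prove that the unit-modulus condition holds.

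It then remains to check positivity and distinctness on $0\le a\le a'$, and to identify the endpoints. The denominator $30h^4-24h^2+12$ has negative discriminant in $h^2$, so $\lambda_i>0$ automatically. The real-rootedness of $\omega_a$ in $[-1,1]$ holds at $a=0$ and persists until the discriminant of $\omega_a$ first vanishes; we expect this to happen at $a=a'=(4-\sqrt6)^{3/2}/125$. There a double root produces a coalescing pair of orbits, or a degenerate orbit on the $R$-axis. That degeneration yields the pole of weight $2\lambda=1/9$ of the Heo--Xu formula (note $\lambda(\pm1)=1/18$). At $a=0$, $\omega_0=u(u^4-\frac45u^2+\frac2{25})$ is odd. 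We would exhibit an explicit rotation taking the resulting configuration to $(X_4^{\mathrm P},\Lambda_4^{\mathrm P})$, using that the $R$-axis is a twofold axis of Popov's antiprism, and verify this by comparing inner products and weights. The analogous comparison at $a'$ gives equivalence with Heo--Xu.
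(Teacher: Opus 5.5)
\emph{Verdict: there is a genuine gap.} Your reduction to groups (i)--(iii), the parameter count, and the $\alpha,\beta$ description of the solutions of (ii) are all sound and agree with the paper. So is your plan to check that the interpolatory weights of $\omega_a$ equal $\lambda(h)$. The gap is the existence of the azimuths for every $a\in[0,a']$. This is the heart of the corollary, and you leave it as an expectation (``the main obstacle'').

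The missing idea is that no explicit $\alpha(a),\beta(a)$ are needed.
\begin{itemize}
\item Put $M_{j,:}=\sqrt{2\lambda_j}\,\bigl(1,\sqrt3h_j,\frac{\sqrt5}{2}(3h_j^2-1)\bigr)$. The height moments give $M^TM=I_3$.
\item Take any orthonormal basis $Z\in\mathbb R^{5\times2}$ of the complement of the columns of $M$. Since $(M\ Z)$ is orthogonal, $ZZ^T=I_5-MM^T$.
\item The identity $1-2\lambda(h)\bigl(1+3h^2+\frac54(3h^2-1)^2\bigr)=\frac{15}{2}\lambda(h)(1-h^2)^2$ is just a restatement of $\lambda(h)^{-1}=30h^4-24h^2+12$. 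It gives $|Z_{j1}+iZ_{j2}|^2=\frac{15}{2}\lambda_j\rho_j^4$.
\item Hence $\eta_j=(Z_{j1}+iZ_{j2})/(\sqrt{15\lambda_j/2}\,\rho_j^2)$ is unimodular. The relation $M^TZ=0$ is (ii), and $Z^TZ=I_2$, i.e.\ $\sum_j(Z_{j1}+iZ_{j2})^2=0$, is (iii).
\end{itemize}
In your notation this is exactly your fallback: let (iii) fix $\alpha:\beta$, then normalize. But unit modulus then holds only because of this row-norm identity, which your proposal never identifies. Without it you are left with an unverified identity modulo $\omega_a$. The same construction gives uniqueness up to a common rotation and complex conjugation. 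The paper uses this uniqueness to identify the $a=0$ member with Popov's formula without computing any azimuths.

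\emph{Second, your treatment of $a=a'$ does not work.}
\begin{itemize}
\item The formula $z_i=(\alpha+\beta h_i)/(c_i\omega'(h_i))$ breaks down there, because $\omega_{a'}$ has a double root.
\item The degeneration you describe does not occur. On $[0,a']$ we have $\omega_a(1)=7/25+11a/2>0$ and $\omega_a(-1)=-7/25+11a/2<0$, so no height reaches $\pm1$. Moreover, an orbit collapsing onto the $R$-axis would leave a positive degree-$4$ formula with at most $9$ nodes, contradicting $N_{\mathrm{wt}}(4)=10$.
\item The double root is the interior point $-\sqrt{4-\sqrt6}/5$. It corresponds to two distinct orbits at the same height. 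They stay distinct because coinciding orbits would give two equal rows of the orthogonal matrix $(M\ Z)$.
\item The two weight-$1/9$ nodes of Heo--Xu (the pole and one pentagon vertex) form an ordinary $R$-orbit at height $1/\sqrt{4-\sqrt6}$, where $\lambda=1/9$.
\item The paper then uses that the corresponding rows of the rank-one matrix $C$ vanish. So the remaining eight nodes lie on $K_2(p\cdot x)=K_2(q\cdot x)=0$, which pins down the Heo--Xu configuration.
\end{itemize}
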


\subsection{Organization of the paper}
In Section~\ref{sec:auxiliary}, we introduce the matrices associated with a
cubature formula, derive rank relations, and prove a criterion for regularity
of the moment map. Section~\ref{sec:regularity} deals with the regularity of
Popov's formulas and local parameters for their deformations. In
Section~\ref{sec:one-parameter}, we construct a one-parameter family of
degree-four formulas connecting the formulas of Popov and Heo--Xu.
Section~\ref{app:min6} gives a proof of the minimal number of nodes in
degree six.
\section{Design matrices and a regularity criterion for the moment map}
\label{sec:auxiliary}

We retain the notation $\Phi_t$, $Y_j$, $D_t$, and $K_t$ introduced above.
Here $I_N$ denotes the identity matrix of size $N$, and $C\succeq0$
means that the matrix is positive semidefinite.

\begin{lem}\label{lem:projectors}
Let $(X,\Lambda)$ be a positive cubature formula of degree $t=2s$.
Consider the evaluation matrix
\[
V_{ij}=\sqrt{\lambda_i}\,Y_j(x_i),\quad
i=1,\ldots,N,\quad j=1,\ldots,D_s.
\]
Then $V^{T}V=I_{D_s}$, and the symmetric matrices
\[
A=VV^{T},\quad B=I_N-A,\quad
C_{ij}=\frac{B_{ij}}{\sqrt{\lambda_i\lambda_j}}
\]
satisfy the following properties:

\begin{enumerate}[\rm (i)]
\item $A$ and $B$ are orthogonal projectors;
\item $\operatorname{rank}A=D_s$, $\operatorname{rank}B=
\operatorname{rank}C=N-D_s$;
\item the following identities hold:
\begin{equation}\label{eq:ABC}
A_{ij}=\sqrt{\lambda_i\lambda_j}K_s(x_i\cdot x_j),\quad
A_{ii}=\lambda_iD_s;
\end{equation}
\item we have $C\succeq0$ and
\[
C_{ii}=\lambda_i^{-1}-D_s,\quad C_{ij}=-K_s(x_i\cdot x_j),\quad i\ne j.
\]
\end{enumerate}

In particular, $N\ge D_s$ and $\lambda_i\le D_s^{-1}$.
\end{lem}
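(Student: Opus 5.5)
The key observation is that $V^TV=I_{D_s}$ is exactly the cubature identity~\eqref{eq:def} applied to products of basis harmonics. For $j,k\le D_s$ we have
\[
(V^TV)_{jk}=\sum_{i=1}^N\lambda_iY_j(x_i)Y_k(x_i).
\]
The product $Y_jY_k$ has degree at most $2s=t$, so the formula integrates it exactly. This gives $\int Y_jY_k\,d\sigma=\delta_{jk}$ by orthonormality. Everything else follows from linear algebra and the definition of $K_s$.

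\emph{Step 1: projectors and ranks.} From $V^TV=I$ we get
\[
A^2=V(V^TV)V^T=A,
\]
and $A$ is symmetric, so $A$ is an orthogonal projector. Hence $B=I_N-A$ is also one. Next, $\operatorname{rank}A=\operatorname{rank}V=D_s$, because $V^TV=I_{D_s}$ forces $V$ to have full column rank. The trace (equivalently, the complementary projector) then gives $\operatorname{rank}B=N-D_s$. Finally,
\[
C=\Lambda^{-1/2}B\Lambda^{-1/2},\qquad \Lambda^{-1/2}=\operatorname{diag}(\lambda_i^{-1/2}).
\]
Congruence by an invertible diagonal matrix preserves rank, so $\operatorname{rank}C=N-D_s$. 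By Sylvester's law of inertia it also preserves positive semidefiniteness. Since $B=B^TB\succeq0$, we get $C\succeq0$.

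\emph{Step 2: entries.} Expanding the product gives
\[
A_{ij}=\sqrt{\lambda_i\lambda_j}\sum_{k\le D_s}Y_k(x_i)Y_k(x_j)=\sqrt{\lambda_i\lambda_j}\,K_s(x_i\cdot x_j),
\]
by the definition of $K_s$. On the diagonal, $K_s(1)=D_s$ gives $A_{ii}=\lambda_iD_s$. Then $B_{ii}=1-\lambda_iD_s$ and $B_{ij}=-A_{ij}$ for $i\ne j$. Dividing by $\sqrt{\lambda_i\lambda_j}$ yields the stated formulas $C_{ii}=\lambda_i^{-1}-D_s$ and $C_{ij}=-K_s(x_i\cdot x_j)$.

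\emph{Step 3: consequences.} Since $\operatorname{rank}A=D_s\le N$, we have $N\ge D_s$. Since $C\succeq0$, its diagonal entries are nonnegative, so $\lambda_i^{-1}\ge D_s$, i.e.\ $\lambda_i\le D_s^{-1}$.

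There is no real obstacle here. The only points needing care are the degree count $\deg(Y_jY_k)\le 2s=t$, which justifies applying the cubature formula, and the use of congruence invariance for both rank and semidefiniteness of $C$.
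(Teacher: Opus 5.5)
Your proof is correct and follows essentially the same route as the paper: exactness on the products $Y_jY_k$ (of degree at most $2s=t$) gives $V^TV=I_{D_s}$, and the rest is projector algebra plus congruence by the invertible diagonal matrix $\operatorname{diag}(\lambda_i^{-1/2})$. The only cosmetic difference is that you derive $\lambda_i\le D_s^{-1}$ from $C_{ii}\ge0$, whereas the paper uses the equivalent inequality $B_{ii}=1-\lambda_iD_s\ge0$.
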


\begin{proof}
Since $Y_iY_j\in\Pi_{2s}(\mathbb S^2)$, it follows from~\eqref{eq:def} that
\[
(V^{T}V)_{ij}=\sum_{k=1}^N\lambda_kY_i(x_k)Y_j(x_k)
=\int_{\mathbb S^2}Y_iY_j\,d\sigma=\delta_{ij}.
\]
Thus $\operatorname{rank}V=D_s$ and
$A^2=V(V^{T}V)V^{T}=A$. Identities~\eqref{eq:ABC} follow from the definition
of $K_s$ and the equality $K_s(1)=D_s$. The matrix $B$ is the complementary
orthogonal projector. The matrix $C$ is obtained from $B$ by multiplication
on the left and right by the invertible diagonal matrix
$\operatorname{diag}(\lambda_1^{-1/2},\ldots,\lambda_N^{-1/2})$,
so its rank and positive semidefiniteness are preserved.
Finally, $B_{ii}=1-\lambda_iD_s\ge0$.
\end{proof}

\begin{cor}
For a $10$-node formula of degree $4$, the matrix $C$ has rank $1$, and
\begin{equation}\label{eq:4-basic}
K_2(x_i\cdot x_j)^2=(\lambda_i^{-1}-9)(\lambda_j^{-1}-9),\quad i\ne j.
\end{equation}
For an $18$-node formula of degree $6$, the matrix $C$ has rank $2$.
Consequently, for any distinct $i$, $j$, $k$,
\begin{equation}\label{eq:6-rank-relation}
\det\begin{pmatrix}
\lambda_i^{-1}-16 & -K_3(x_i\cdot x_j) & -K_3(x_i\cdot x_k)\\
-K_3(x_i\cdot x_j) & \lambda_j^{-1}-16 & -K_3(x_j\cdot x_k)\\
-K_3(x_i\cdot x_k) & -K_3(x_j\cdot x_k) & \lambda_k^{-1}-16
\end{pmatrix}=0.
\end{equation}
\end{cor}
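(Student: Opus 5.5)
This is a direct specialization of Lemma~\ref{lem:projectors}, combined with elementary facts about the minors of low-rank positive semidefinite matrices.

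\textbf{Degree $4$.} Here $t=4$, so $s=2$ and $D_2=9$. Lemma~\ref{lem:projectors}(ii) gives $\operatorname{rank}C=N-D_s=10-9=1$.

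A symmetric positive semidefinite matrix of rank $1$ has the form $C=cc^T$ for some vector $c\in\mathbb R^N$. Consequently every $2\times2$ minor vanishes:
\[
C_{ii}C_{jj}-C_{ij}^2=0 .
\]
By Lemma~\ref{lem:projectors}(iv) we have $C_{ii}=\lambda_i^{-1}-9$ and $C_{ij}=-K_2(x_i\cdot x_j)$ for $i\ne j$. Substituting these into the minor identity gives exactly~\eqref{eq:4-basic}.

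\textbf{Degree $6$.} Here $t=6$, so $s=3$ and $D_3=16$. Lemma~\ref{lem:projectors}(ii) gives $\operatorname{rank}C=18-16=2$.

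Every $3\times3$ minor of a matrix of rank $2$ vanishes; in particular, so does every principal $3\times3$ minor on the index set $\{i,j,k\}$. Lemma~\ref{lem:projectors}(iv) gives $C_{ii}=\lambda_i^{-1}-16$ and $C_{ij}=-K_3(x_i\cdot x_j)$ for $i\ne j$. Inserting these entries into the vanishing principal minor yields~\eqref{eq:6-rank-relation}.

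\textbf{Hypotheses.} The only thing to check is that the Lemma applies. A $10$-node formula of degree $4$ and an $18$-node formula of degree $6$ are positive cubature formulas of even degree $t=2s$, which is exactly the setting of Lemma~\ref{lem:projectors}. Nothing else is required, and I do not expect any real obstacle here. The one point that uses positive semidefiniteness is the factorization $C=cc^T$, and that is not even essential: the vanishing of all $2\times2$ minors of a rank-$1$ matrix follows from the rank condition alone.
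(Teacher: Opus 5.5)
Your proposal is correct and follows the paper's own proof: Lemma~\ref{lem:projectors} gives $\operatorname{rank}C=N-D_s$, which equals $10-9=1$ and $18-16=2$. Substituting the entries from part (iv) into the vanishing $2\times2$ and $3\times3$ principal minors yields the two relations, and you are right that the rank condition alone (without positive semidefiniteness) is enough for the minors to vanish.
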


\begin{proof}
Apply Lemma~\ref{lem:projectors}: $10-D_2=1$ and $18-D_3=2$.
The relations follow from the vanishing of the corresponding principal minors.
\end{proof}

In the cases under consideration,
\[
K_2(u)=\frac{15u^2+6u-3}{2},\quad K_3(u)=\frac{35u^3+15u^2-15u-3}{2}.
\]
Thus~\eqref{eq:4-basic}, with $u=x_i\cdot x_j$, $i\ne j$, becomes
\[
225u^{4}+180u^{3}-54u^{2}-36u-315
+36(\lambda_i^{-1}+\lambda_j^{-1})-4(\lambda_i\lambda_j)^{-1}=0.
\]
This is precisely the relation stated in~\cite{BondarenkoGorbachev}. For degree
$6$, its natural analogue is the three-point
relation~\eqref{eq:6-rank-relation}. Both relations are necessary for a
configuration to be a positive cubature formula of the corresponding degree,
but neither is by itself equivalent to the moment condition
$\Phi_t(X,\Lambda)=0$. A rank-based approach to ruling out spherical
$4$-designs with equal weights was also used in~\cite{G10,SSP}.

To study deformations, we need the following regularity criterion for the
moment map. We write $\nabla_Tq$ for the spherical gradient of
$q\in\Pi_t(\mathbb S^2)$.

\begin{lem}\label{lem:rankcriterion}
Suppose all weights are positive. Then
$\operatorname{rank}\mathrm d\Phi_t(X,\Lambda)=D_t$ if and only if, for any
$q\in\Pi_t(\mathbb S^2)$, the conditions
\begin{equation}\label{q-q}
q(x_j)=0,\quad \nabla_Tq(x_j)=0,\quad j=1,\ldots,N,
\end{equation}
imply $q=0$.
\end{lem}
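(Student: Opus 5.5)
We argue by duality: a linear map $L=\mathrm d\Phi_t(X,\Lambda)$ into $\mathbb R^{D_t}$ has rank $D_t$ if and only if no nonzero vector $c\in\mathbb R^{D_t}$ annihilates its image, i.e.\ $c^TL=0$ forces $c=0$. We identify each $c=(c_1,\dots,c_{D_t})$ with the polynomial $q=\sum_{k=1}^{D_t}c_kY_k\in\Pi_t(\mathbb S^2)$. Since $Y_1,\dots,Y_{D_t}$ is a basis of $\Pi_t(\mathbb S^2)$, this correspondence is a linear isomorphism, and $c=0$ exactly when $q=0$. It therefore suffices to show that $c^TL=0$ is equivalent to the conditions~\eqref{q-q} for $q$.

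To do this, compute the pairing $c\cdot\Phi_t(X,\Lambda)=\sum_j\lambda_jq(x_j)-\int q\,d\sigma$; the integral term is constant. Differentiate with respect to the variables $(X,\Lambda)$. The derivative in $\lambda_j$ is $q(x_j)$. Along a tangent vector $v\in T_{x_j}\mathbb S^2$, the derivative in $x_j$ is $\lambda_j\,\nabla_Tq(x_j)\cdot v$. Here $\mathbb S^2$ is treated as a manifold, so only tangential variations occur, which is why the spherical gradient appears rather than the ambient one.

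Thus $c^TL=0$ holds if and only if $q(x_j)=0$ and $\lambda_j\nabla_Tq(x_j)=0$ for all $j$. Since every $\lambda_j>0$, the second condition is equivalent to $\nabla_Tq(x_j)=0$, which gives exactly~\eqref{q-q}. So the rows of $L$ are independent, i.e.\ $\operatorname{rank}L=D_t$, if and only if~\eqref{q-q} implies $q=0$.

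The only point needing care is the positivity of the weights, which is used exactly once: to divide out $\lambda_j$ in the gradient condition. There is no real obstacle beyond this bookkeeping, together with checking that the basis identification $c\leftrightarrow q$ is an isomorphism.
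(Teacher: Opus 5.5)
Your proposal is correct and follows essentially the same route as the paper. Both identify a vector in $\ker J^T$ with $q=\sum_k c_kY_k$, differentiate the pairing $\sum_j\lambda_jq(x_j)-\int q\,d\sigma$ in the weights and tangential node variations, and use $\lambda_j>0$ to reduce to conditions~\eqref{q-q}.
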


\begin{proof}
Let $J$ be the matrix of the differential $\mathrm d\Phi_t(X,\Lambda)$
in local coordinates. Then
$\operatorname{rank}\mathrm d\Phi_t(X,\Lambda)=D_t$ if and only if
$\ker J^T=\{0\}$.

Put $b=(b_1,\ldots,b_{D_t})^T\in\ker J^T$ and
$q=\sum_{k=1}^{D_t}b_kY_k$. We have
\[
\sum_{k=1}^{D_t}b_k(\Phi_t)_k= \sum_{j=1}^N\lambda_jq(x_j)-\int_{\mathbb
S^2}q\,d\sigma.
\]
Differentiating with respect to $(X,\Lambda)$ gives
\[
\sum_{k=1}^{D_t}b_k\,\mathrm d(\Phi_t)_k
=
\sum_{j=1}^N
\bigl(q(x_j)\,\mathrm d\lambda_j
+\lambda_j\nabla_Tq(x_j)\cdot\mathrm dx_j\bigr).
\]
Since $J^Tb=0$, the left-hand side vanishes identically. The variations of
the weights and nodes are independent, and $\lambda_j>0$; hence
conditions~\eqref{q-q} hold. Conversely, if conditions~\eqref{q-q} hold for the corresponding $q$,
then $J^Tb=0$. This proves the lemma.
\end{proof}

\begin{rem}\label{rem-node}
If the positions of certain nodes are subject to additional constraints,
the condition $\nabla_Tq(x_j)=0$ is replaced by the requirement that
$\nabla_Tq(x_j)$ be orthogonal to all admissible directions of variation of
$x_j$. For a fixed node, this condition is absent.

When computing the rank, one may use any basis of
$\Pi_t(\mathbb S^2)$ instead of $Y_1,\ldots,Y_{D_t}$, including a
nonorthogonal basis: a change of basis multiplies the moment map and its
differential $\mathrm d\Phi_t$ on the left by a fixed invertible matrix,
and therefore does not change the rank of the differential.
\end{rem}

\section{Regularity of Popov's formulas}\label{sec:regularity}

We use the following nonorthogonal basis of $\Pi_t(\mathbb S^2)$ over
$\mathbb C$:
\[
f_{m,k}(h,\varphi)=(1-h^2)^{|m|/2}h^ke^{im\varphi},\quad -t\le m\le t,\quad
0\le k\le t-|m|.
\]
For $m\ge0$, these are the restrictions of the polynomials
$(\xi_1+i\xi_2)^m\xi_3^k$; for $m<0$, they are the restrictions of
$(\xi_1-i\xi_2)^{|m|}\xi_3^k$. Decomposition into azimuthal frequencies
shows that they are linearly independent, and their number is $D_t$.
We have
\[
\int_{\mathbb S^2}f_{m,k}\,d\sigma=
\begin{cases}
1/(k+1), & \text{$m=0$, $k$ even},\\
0, & \text{otherwise},
\end{cases}\quad
f_{m,k}(Gx)=\zeta^{m+4k}f_{m,k}(x),
\]
where $\zeta=e^{\pi i/4}$. Partition the basis into subsets
\[
\mathcal B_r=\{f_{m,k}\colon m+4k\equiv r\pmod8\},\quad r=0,\ldots,7,
\]
ordered first by increasing $m$ and then by increasing $k$. Then
\[
\Pi_t(\mathbb S^2)=\bigoplus_{r=0}^7E_r,\quad
E_r=\operatorname{span}\mathcal B_r,\quad
q(Gx)=\zeta^rq(x)\quad(q\in E_r).
\]
Passing to complex coefficients does not change the rank of the real
matrix of the moment map.

\begin{lem}\label{lem:popov-exactness}
For $t=4,6$, the formulas $(X_t^{\mathrm P},\Lambda_t^{\mathrm P})$ are
exact on $\Pi_t(\mathbb S^2)$.
\end{lem}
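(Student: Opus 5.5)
We will check exactness directly on the basis $f_{m,k}$, using the $G$-invariance of Popov's formulas to discard most basis elements at once. Since the formula is invariant under $G$, for every $q\in E_r$ we get
\[
\sum_j\lambda_jq(x_j)=\sum_j\lambda_jq(Gx_j)=\zeta^r\sum_j\lambda_jq(x_j).
\]
Hence the cubature sum vanishes for $r\ne0$. The integral also vanishes for $r\ne0$: if $f_{m,k}\in\mathcal B_r$ with $r\ne0$ and $m=0$, then $4k\not\equiv0\pmod 8$, so $k$ is odd and $\int f_{0,k}\,d\sigma=0$. It therefore suffices to verify the formula on $E_0$, namely on the $f_{m,k}$ with $m+4k\equiv0\pmod8$ and $|m|+k\le t$.

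Next we list $\mathcal B_0$. For $t=4$ it consists of $f_{0,0},f_{0,2},f_{0,4}$ and $f_{\pm4,0}$. For $t=6$ it contains in addition $f_{0,6}$ and $f_{\pm4,2}$; note that $f_{\pm4,1}$ has $m+4k=\pm4+4\not\equiv0$ for $m=4$, so it must be checked case by case. In general $m\equiv 4k\pmod8$, so $m\in\{0,\pm8\}$ when $k$ is even and $m=\pm4$ when $k$ is odd. Within $|m|\le6$ this leaves two families: $m=0$ with $k$ even, and $m=\pm4$ with $k$ odd. The conjugate pairs $m$ and $-m$ give complex-conjugate conditions, so only $m\ge0$ needs to be treated. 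The resulting conditions are as follows.

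For $t=4$ there are the zonal conditions $k=0,2,4$, and there is no $m=4$ term, since $k$ odd would require $4+k\le4$, which is impossible.

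For $t=6$ there are the zonal conditions $k=0,2,4,6$ together with $f_{4,1}=(1-h^2)^2he^{4i\varphi}$.

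For a node $x(h,\varphi)$ on the antiprism, the orbit under $G$ consists of the points $(\pm h)$ alternating with azimuths $\varphi+j\pi/4$. The zonal condition then reads
\[
\tfrac{2}{|\cdot|}\lambda_{\rm pole}+8\lambda h^k+\cdots=\frac1{k+1},
\]
summed over the orbits, with the poles contributing $2\lambda_0$ for $k$ even. For $t=4$ this is three equations:
\[
\tfrac16+\tfrac{10}{12}=1,\qquad \tfrac16+\tfrac56\cdot\tfrac15=\tfrac13,\qquad \tfrac16+\tfrac56\cdot\tfrac1{25}=\tfrac15,
\]
and all three hold. For the $f_{4,1}$ condition when $t=6$, each orbit contributes
\[
\lambda(1-h^2)^2\sum_{j}(\pm h)\,e^{4i(\varphi+j\pi/4)},
\]
where the sign alternates with $j$ and $e^{ij\pi}=(-1)^j$. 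The two alternations cancel, so the orbit gives $8\lambda(1-h^2)^2he^{4i\varphi}$. With $\varphi=0$ for the first orbit and $\varphi=\pi/4$ for the second, the condition becomes
\[
\lambda_+(1-h_+^2)^2h_+=\lambda_-(1-h_-^2)^2h_-.
\]

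The main computational step is verifying this equation together with the four zonal equations in the quadratic field $\mathbb Q(\sqrt{30})$, using the values \eqref{h-lambda}. For the zonal sums we have $h_\pm^2=(13\pm2\sqrt{30})/49$, so $2\cdot\frac2{45}+8(\lambda_++\lambda_-)=\frac4{45}+\frac{8\cdot820}{7200}=1$, and the higher moments are routine rational arithmetic in conjugate pairs. The odd condition involves $h_\pm$ themselves rather than their squares. I would square both sides, which is legitimate since both sides are positive, and check
\[
\lambda_+^2(1-h_+^2)^4h_+^2=\lambda_-^2(1-h_-^2)^4h_-^2.
\]
This amounts to showing that the expression is invariant under $\sqrt{30}\mapsto-\sqrt{30}$, that is, that its $\sqrt{30}$-coefficient vanishes. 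That is a finite symbolic check, and I expect it to be the main computational obstacle.
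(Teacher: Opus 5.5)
Your proof is correct and follows the paper's argument. $G$-invariance eliminates every $E_r$ with $r\ne0$. What remains is $\mathcal B_0$, consisting of the even zonal powers together with $f_{\pm4,1}$ for $t=6$, and you check the same four zonal moment identities and the same condition $\lambda_+h_+(1-h_+^2)^2=\lambda_-h_-(1-h_-^2)^2$ that the paper derives from \eqref{h-lambda}.

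You should fix the slip in your first listing of $\mathcal B_0$. There $f_{\pm4,0}$ and $f_{\pm4,2}$ actually lie in $E_4$, while $f_{\pm4,1}$ does lie in $E_0$, since $\pm4+4\equiv0\pmod 8$; your later congruence $m\equiv4k\pmod8$ already gets this right. Also, using $h_+h_-=1/7$ lets you avoid squaring: the odd condition reduces to the rational check $(410+\sqrt{30})(13+2\sqrt{30})(1416-144\sqrt{30})=7(410-\sqrt{30})(1416+144\sqrt{30})$.
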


\begin{proof}
Both formulas and the measure $\sigma$ are invariant under $G$. Therefore, for
$r\ne0$, the cubature sum and the integral of every function in $E_r$ vanish,
so it suffices to verify exactness on $\mathcal B_0$. For $t=4$, this basis
consists of $1$, $h^2$, $h^4$, and
\[
2\,\frac1{12}+8\,\frac5{48}\Bigl(\frac1{\sqrt5}\Bigr)^{2j}
=\frac1{2j+1},\quad j=0,1,2.
\]
For $t=6$, we have
$\mathcal B_0=\{1,h^2,h^4,h^6,f_{-4,1},f_{4,1}\}$.
The required equalities
\[
2\,\frac2{45}+8\lambda_+h_+^{2j}+8\lambda_-h_-^{2j}
=\frac1{2j+1},\quad j=0,1,2,3,
\]
\[
\lambda_+h_+(1-h_+^2)^2=\lambda_-h_-(1-h_-^2)^2
\]
follow from~\eqref{h-lambda}.
\end{proof}
\subsection{Regularity for $t=4$}
Number the nodes as follows:
\[
x_1=e_3,\quad x_2=Gx_1=-e_3,\quad
x_3=x\Bigl(\frac1{\sqrt5},0\Bigr),\quad
x_{3+k}=G^kx_3,\quad k=0,\ldots,7.
\]

\begin{lem}\label{lem:rank4}
We have
\[
\operatorname{rank}\mathrm{d}\Phi_4(X_4^{\mathrm P},\Lambda_4^{\mathrm P})=25.
\]
The rank remains $25$ when both poles are fixed.
\end{lem}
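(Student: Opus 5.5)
We will apply Lemma~\ref{lem:rankcriterion} together with Remark~\ref{rem-node}. It suffices to show that if $q\in\Pi_4(\mathbb S^2)$ satisfies $q(x_j)=0$ for $j=1,\dots,10$ and $\nabla_Tq(x_j)=0$ for $j=3,\dots,10$, then $q=0$. Here we impose no gradient condition at the poles, which covers the stronger statement with both poles fixed. The unconstrained rank $25$ then follows, since adding conditions can only shrink the space of admissible $q$.

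First, we exploit the $G$-symmetry. The node set is a union of $G$-orbits, and the system of conditions is $G$-invariant. If $q$ satisfies it, then so does each projection
\[
q_r=\tfrac18\sum_{k=0}^{7}\zeta^{-rk}\,q\circ G^k\in E_r .
\]
So we may assume $q\in E_r$ for a single $r$. For such $q$ we have $q(G^kx)=\zeta^{rk}q(x)$, and the gradient transforms covariantly. The conditions on the whole orbit are therefore equivalent to the conditions at one representative: $q(e_3)=0$ at the poles, and $q(x_3)=\partial_hq(x_3)=\partial_\varphi q(x_3)=0$ at $x_3=x(1/\sqrt5,0)$. This gives at most $4$ linear conditions per class, while $\dim E_r$ is about $25/8$.

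Next, we list the classes from $r\equiv m+4k\pmod 8$ with $|m|\le4$ and $k\le4-|m|$. The classes are:
\begin{itemize}
\item[] $E_0=\{1,h^2,h^4\}$, $E_4=\{h,h^3,f_{-4,0},f_{4,0}\}$;
\item[] $E_1=\{f_{-3,1},f_{1,0},f_{1,2}\}$, $E_5=\{f_{-3,0},f_{1,1},f_{1,3}\}$;
\item[] $E_3=\{f_{-1,1},f_{-1,3},f_{3,0}\}$, $E_7=\{f_{-1,0},f_{-1,2},f_{3,1}\}$;
\item[] $E_2=\{f_{-2,1},f_{2,0},f_{2,2}\}$, $E_6=\{f_{-2,0},f_{-2,2},f_{2,1}\}$.
\end{itemize}

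We then check each class.
\begin{itemize}
\item[] For $E_0$, write $q=p(h^2)$ with $\deg p\le2$. Then $p(1)=0$, and $h^2=1/5$ must be a double root, because $\partial_hq=2hp'(h^2)$ and $h\neq0$ there. This forces $p=0$.
\item[] For $E_4$, the pole condition kills the $h,h^3$ part modulo one relation. The remaining three conditions at $x_3$ give a $4\times4$ system whose determinant we compute explicitly.
\item[] For the classes with $m\ne0$, every $f_{m,k}$ vanishes at the poles. So the three conditions at $x_3$ form a $3\times3$ system. Its columns are $(f,\partial_hf,\partial_\varphi f)$ evaluated at $h=1/\sqrt5$, $\varphi=0$, where $\partial_\varphi f=imf$.
\end{itemize}
In each case we will eliminate the value and $\varphi$-derivative rows using the factor $(1-h^2)^{|m|/2}h^k$. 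This reduces the problem to a small Vandermonde-type determinant in $h$ with distinct $m$'s, which we evaluate at $h^2=1/5$.

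The main obstacle is this last step: verifying that the eight small determinants are nonzero at $h=1/\sqrt5$. The mixed classes such as $E_1$, which combine $m=1$ and $m=-3$, are not covered by a clean structural argument and require explicit arithmetic. We would first try to exploit the relation $\partial_\varphi=im$ to reduce each $3\times3$ determinant to a $2\times2$ one in $h$ alone. If that does not close the argument, we will simply compute the determinants.
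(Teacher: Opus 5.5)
Your proposal matches the paper's proof. You reduce via Lemma~\ref{lem:rankcriterion} and Remark~\ref{rem-node} to $q\in E_r$, use only $q(e_3)=0$ at the poles together with the value and both derivatives at $x_3$, and check that each square system is nonsingular; your class list and your structural argument for $E_0$ are correct, and the paper's $4\times4$ determinant for $E_4$ is $-12288/3125$. The remaining eight checks also close without arithmetic, contrary to your worry: for $r\ne0,4$ each class consists of one $f_{m_1,k_1}$ and two functions $f_{m_2,k}$, $f_{m_2,k'}$ with $m_1\ne m_2$ and $k\ne k'$, so the value and $\partial_\varphi$ rows kill the $f_{m_1,k_1}$ term and make the other two terms at $x_3$ opposite, after which the $\partial_h$ row (values times $k/h-|m|h/(1-h^2)$) reduces to $(k-k')/h$ times one of them, forcing all coefficients to vanish because $h=1/\sqrt5\ne0$ and every $f_{m,k}(x_3)\ne0$.
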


\begin{proof}
The subspace of polynomials satisfying condition~\eqref{q-q} of
Lemma~\ref{lem:rankcriterion} is invariant under $G$, and hence also
under projection onto the subspaces $E_r$. Fix $r$ and take any $q\in E_r$.
We need to check that the conditions
\[
q(x_j)=0,\quad \nabla_Tq(x_j)=0,\quad j=1,\ldots,10,
\]
imply $q=0$. For $q\in E_r$ we have
\[
q(G^l x)=\zeta^{rl}q(x),\quad \nabla_Tq(G^l x)=\zeta^{rl}G^l\nabla_Tq(x).
\]
Thus the conditions on the two orbits reduce to those at their
representatives $x_1$ and $x_3$.

In cylindrical coordinates, the conditions at $x_3$ take the form
\[
q(x_3)=0,\quad \partial_hq(x_3)=0,\quad \partial_\varphi q(x_3)=0.
\]
Of the conditions at the pole, we use only $q(x_1)=0$. Write
\[
q=\sum_{f_{m,k}\in\mathcal B_r}c_{m,k}f_{m,k}.
\]
For convenience, multiply the condition $\partial_\varphi q(x_3)=0$
by $i^{-1}$. Then
\[
\begin{pmatrix}
q(x_3)\\
\partial_hq(x_3)\\
i^{-1}\partial_\varphi q(x_3)\\
q(x_1)
\end{pmatrix}=
\sum_{f_{m,k}\in\mathcal B_r}c_{m,k}
\begin{pmatrix}
f_{m,k}(x_3)\\ \sqrt5(k-|m|/4)f_{m,k}(x_3)\\ mf_{m,k}(x_3)\\ \delta_{m0}
\end{pmatrix}=0,
\]
where
\[
f_{m,k}(x_3)=\Bigl(\frac45\Bigr)^{|m|/2}5^{-k/2}.
\]
For $r=0$, the third row vanishes identically, while for
$r\ne0,4$, the fourth row vanishes. Let $M_r$ be the square matrix
formed by the remaining rows, with columns ordered according to
$\mathcal B_r$. For example,
\[
\mathcal B_0=(f_{0,0},f_{0,2},f_{0,4}),\quad
M_0=
\begin{pmatrix}
1&1/5&1/25\\
0&2/\sqrt5&4/(5\sqrt5)\\
1&1&1
\end{pmatrix},
\]
so that $\det M_0=32\sqrt5/125$. Similarly,
\[
\begin{array}{c|c|c}
r&\dim E_r&\det M_r\\ \hline
0&3&32\sqrt5/125\\
1&3&-256\sqrt5/625\\
2&3&-512/625\\
3&3&256/625\\
4&4&-12288/3125\\
5&3&-256/625\\
6&3&512/625\\
7&3&256\sqrt5/625
\end{array}
\]
All determinants are nonzero, so all coefficients $c_{m,k}$ vanish.
Since we did not use derivatives at the poles, Remark~\ref{rem-node}
shows that the rank remains unchanged when both poles are fixed.
\end{proof}

\subsection{Regularity for $t=6$}
Number the nodes as follows:
\[
x_1=e_3,\quad x_2=Gx_1=-e_3,\quad x_3=x(h_+,0),\quad
x_{11}=x\Bigl(h_-,\frac\pi4\Bigr),
\]
\[
x_{3+k}=G^kx_3,\quad x_{11+k}=G^kx_{11},\quad k=0,\ldots,7.
\]

\begin{lem}\label{lem:rank6}
We have
\[
\operatorname{rank}\mathrm d\Phi_6(X_6^{\mathrm P},\Lambda_6^{\mathrm P})=49.
\]
The rank remains $49$ when both poles are fixed.
\end{lem}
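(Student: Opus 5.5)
We follow the proof of Lemma~\ref{lem:rank4} closely. By Lemma~\ref{lem:rankcriterion}, it suffices to show that every $q\in\Pi_6(\mathbb S^2)$ satisfying~\eqref{q-q} at all $18$ nodes vanishes. The set of such $q$ is $G$-invariant, so it splits along $\Pi_6(\mathbb S^2)=\bigoplus_{r=0}^7E_r$, and we may assume $q\in E_r$ for a fixed $r$. Because $q(G^lx)=\zeta^{rl}q(x)$ and $\nabla_Tq(G^lx)=\zeta^{rl}G^l\nabla_Tq(x)$, the conditions on each orbit reduce to conditions at one representative. At $x_3$ and at $x_{11}$ we impose $q=0$, $\partial_hq=0$ and $i^{-1}\partial_\varphi q=0$. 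At the pole we impose only $q(x_1)=0$, and this is nontrivial only for $r=0,4$. We never use gradients at the poles, so Remark~\ref{rem-node} will give the statement with both poles fixed.

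Write $q=\sum_{f_{m,k}\in\mathcal B_r}c_{m,k}f_{m,k}$. At $x(h,\varphi_0)$ we have
\[
f_{m,k}=(1-h^2)^{|m|/2}h^ke^{im\varphi_0},\qquad
\partial_hf_{m,k}=\Bigl(\frac kh-\frac{|m|h}{1-h^2}\Bigr)f_{m,k},\qquad
i^{-1}\partial_\varphi f_{m,k}=mf_{m,k}.
\]
Here $\varphi_0=0$ for $x_3$ with $h=h_+$, and $\varphi_0=\pi/4$ for $x_{11}$ with $h=h_-$. Thus for each $r$ we get a linear system with at most $7$ rows: $6$ from the two antiprisms plus possibly the pole row. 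We first check the dimensions. Since $D_6=49$, the eight spaces $E_r$ have dimensions $6,6,6,6,7,6,6,6$. For $r=0$ the two $\partial_\varphi$ rows vanish identically, because the basis elements in $\mathcal B_0$ have $m\in\{0,\pm4\}$ and $e^{im\varphi}$ with $m=\pm4$ takes real values at both azimuths. Whether this really kills those rows needs checking: for $m=\pm4$ the row entries are $\pm4f$, which need not vanish. So we count carefully: $\mathcal B_0=\{1,h^2,h^4,h^6,f_{-4,1},f_{4,1}\}$, and the rows are $2\cdot3+1=7$ for six unknowns. We select a nonsingular $6\times6$ minor, or equivalently show that the $7\times6$ matrix has rank $6$. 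For $r=4$ there are $7$ unknowns and $7$ rows, giving a square matrix. For the other $r$ there are $6$ unknowns and $6$ rows. In each case let $M_r$ be the resulting square (or full-column) matrix.

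The core of the proof is to verify $\det M_r\ne0$ (or full column rank when $r=0$) for $r=0,\dots,7$. This is the main obstacle. The entries lie in $\mathbb Q(\sqrt{30},h_+,h_-)$, with $h_\pm^2=(13\pm2\sqrt{30})/49$ and $(h_+h_-)^2=(169-120)/49^2=1/49$, so $h_+h_-=1/7$. This relation, together with $1-h_\pm^2=(36\mp2\sqrt{30})/49$, should let us simplify the determinants to elements of a small number field. We would factor out common powers of $(1-h^2)^{|m|/2}h^k$ column by column, so that what remains is essentially a confluent Vandermonde-type determinant in the variables $h_+$ and $h_-$, twisted by the phases $e^{im\pi/4}$. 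We then compute each $\det M_r$ exactly, by computer algebra if necessary, and tabulate the results as in Lemma~\ref{lem:rank4}. The symmetry $r\leftrightarrow 8-r$ (complex conjugation, $m\to-m$) halves the work, since it gives $\det M_{8-r}=\pm\overline{\det M_r}$. If some determinant turned out to vanish, the fallback would be to reinstate the tangential conditions at the poles for that $r$. In that case the statement about fixed poles would need to be revisited.
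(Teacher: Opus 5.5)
Your reduction is the same as the paper's. You split by the eigenspaces $E_r$, impose $q$, $\partial_hq$, $i^{-1}\partial_\varphi q$ at $x_3=x(h_+,0)$ and $x_{11}=x(h_-,\pi/4)$, add $q(x_1)=0$ for $r=0,4$, and never use pole gradients, so that Remark~\ref{rem-node} gives the fixed-pole clause. Your dimension count $6,6,6,6,7,6,6,6$ is correct, and so is the conjugation symmetry $\det M_{8-r}=\pm\overline{\det M_r}$.

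\textbf{The gap.} The whole content of the lemma is that these eight linear systems are nonsingular, and you do not establish this. You propose computing the determinants ``by computer algebra if necessary'' and speculate that they reduce to confluent Vandermonde-type determinants. You even plan for the case where one of them vanishes. None of this proves $\det M_r\ne0$. The Vandermonde heuristic carries no guarantee here, because the columns mix several frequencies $m$ with phase factors $e^{im\pi/4}$ and weights $k/h-|m|h/(1-h^2)$. The $r=0$ case shows such degeneracies really occur (see below).

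What is missing is the actual evaluation, which the paper supplies. Multiply the $x_{11}$ rows by $e^{-ir\pi/4}$. Since $e^{i(m-r)\pi/4}=(-1)^k$ on $\mathcal B_r$, all entries become real. Then use $49h_+^4-26h_+^2+1=0$, $h_-=1/(7h_+)$ and $\gamma=\sqrt{30}$. One gets explicit values such as $\det M_0=-4246732800(3+41\gamma)/7^{15}$ and $\det M_4=-6522981580800\gamma/7^{16}$. For odd $r$ the determinants are nonzero rational multiples of $\sqrt6(10-\gamma)h_+$. All of these are visibly nonzero.

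\textbf{The case $r=0$.} You leave the choice of the $6\times6$ minor open, but it is essentially forced. The only columns with $m\ne0$ are $f_{\pm4,1}$, so both $\partial_\varphi$ rows are supported on these two columns. At $x_3$ the row is proportional to $(-1,1)$, and at $x_{11}$ (where $e^{\pm i\pi}=-1$) to $(1,-1)$. So the two rows are proportional. Any nonsingular minor must therefore drop one of them and keep the pole row $q(x_1)=\sum_kc_{0,k}$, which is exactly what the paper does.

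\textbf{Your fallback.} Reinstating pole gradients would not work as stated. At $\pm e_3$ the tangential gradient of $f_{m,k}$ vanishes unless $|m|=1$, so these conditions only affect odd $r$ and could not rescue an even $r$. Using them would also forfeit the lemma's second assertion, that the rank stays $49$ with both poles fixed.
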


\begin{proof}
Repeating the argument in the proof of Lemma~\ref{lem:rank4},
for each $r$ it suffices to check conditions~\eqref{q-q} at
representatives $x_1$, $x_3$, and $x_{11}$ of the three orbits. Write
\[
q=\sum_{f_{m,k}\in\mathcal B_r}c_{m,k}f_{m,k}.
\]
At a point $x=x(h,\varphi)$, the conditions $q(x)=0$,
$\nabla_Tq(x)=0$ give
\[
\begin{pmatrix}
q(x)\\
\partial_hq(x)\\
i^{-1}\partial_\varphi q(x)
\end{pmatrix}=
\sum_{f_{m,k}\in\mathcal B_r}
c_{m,k}f_{m,k}(h,\varphi)
\begin{pmatrix}
1\\ k/h-|m|h/(1-h^2)\\ m
\end{pmatrix}.
\]
Apply this identity at $(h,\varphi)=(h_+,0)$ and
$(h_-,\pi/4)$ and add the condition
\[
q(x_1)=\sum_{f_{0,k}\in\mathcal B_r}c_{0,k}=0.
\]

For $r\ne0,4$, the pole row vanishes identically. For $r=4$, we use
all seven rows, placing the pole row last. For $r=0$, the two rows
corresponding to $\partial_\varphi q$ are proportional. We therefore
replace the row $i^{-1}\partial_\varphi q(x_3)$ by the pole row,
keeping its position in the matrix.

For convenience, multiply the three rows corresponding to $x_{11}$ by
$e^{-ir\pi/4}$ and denote the resulting square matrices by $M_r$.
This does not affect their nonsingularity. Moreover, for
$f_{m,k}\in\mathcal B_r$ we have $e^{i(m-r)\pi/4}=(-1)^k$, so all entries
of $M_r$ are real. Put $\gamma=\sqrt{30}$ and use the identities
\[
49h_+^4-26h_+^2+1=0,\quad h_-=\frac1{7h_+},\quad
\gamma=\frac{49h_+^2-13}{2},
\]
We obtain
\[
\begin{array}{c|c|c}
r&\dim E_r&\det M_r\\ \hline
0&6&-4246732800(3+41\gamma)/7^{15}\\
1,7&6&\mp679477248000\sqrt6(10-\gamma)h_+/7^{16}\\
2,6&6&\pm3261490790400\gamma/7^{16}\\
3,5&6&\pm135895449600\sqrt6(10-\gamma)h_+/7^{15}\\
4&7&-6522981580800\gamma/7^{16}
\end{array}
\]
All determinants are nonzero, so all coefficients $c_{m,k}$ vanish.
Since we did not use derivatives at the poles, Remark~\ref{rem-node}
shows that the rank remains unchanged when both poles are fixed.
\end{proof}

\subsection{Local parameters}
We keep the node numbering fixed under small deformations.

\begin{lem}\label{lem:local-chart}
For $t=4,\,6$, there exists $\varepsilon_t>0$ such that, whenever
$\tau_1^2+\tau_2^2<\varepsilon_t^2$, the conditions
\begin{equation}\label{eq:local-gauge}
x_1=e_3,\quad \varphi_3=0,\quad
x_2=\bigl(\tau_1,\tau_2,-\sqrt{1-\tau_1^2-\tau_2^2}\bigr)
\end{equation}
together with the equation $\Phi_t(X,\Lambda)=0$ uniquely determine,
near Popov's formula, all remaining nodes and all weights. These depend
real-analytically on $(\tau_1,\tau_2)$ and describe all sufficiently
nearby solutions subject to $x_1=e_3$, $\varphi_3=0$.
\end{lem}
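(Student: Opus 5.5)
The proof will be a direct application of the implicit function theorem to a reduced moment map. I will use Lemmas~\ref{lem:rank4} and~\ref{lem:rank6} in their strengthened form, in which both poles are fixed.

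First I set up coordinates. With $x_1=e_3$ and $x_2$ fixed, the remaining unknowns are:
\begin{itemize}
\item the $N-2$ free nodes $x_3,\dots,x_N$, each carrying two local coordinates $(h_j,\varphi_j)$;
\item the $N$ weights $\lambda_1,\dots,\lambda_N$.
\end{itemize}
This gives $2(N-2)+N=3N-4$ unknowns. Imposing $\varphi_3=0$ removes one more, leaving $3N-5$. For $t=4$ this is $25=D_4$, and for $t=6$ it is $49=D_6$. So, after fixing the poles and the single azimuth, the moment equation $\Phi_t=0$ is a square system in the remaining unknowns, with $(\tau_1,\tau_2)$ entering as parameters through $x_2$.

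Next I check that the Jacobian of this square system with respect to the unknowns is nonsingular at Popov's formula, where $\tau=0$. Lemmas~\ref{lem:rank4} and~\ref{lem:rank6} show that the rank is $D_t$ when both poles are fixed, that is, for the map with $3N-4$ variables. Fixing $\varphi_3$ as well deletes one column, so I must check that the rank does not drop.

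By Remark~\ref{rem-node}, rank $D_t$ with $\varphi_3$ fixed is equivalent to the following: every $q\in\Pi_t(\mathbb S^2)$ with $q(x_j)=0$ for all $j$, $\nabla_T q(x_j)=0$ for $j\ge4$, and only $\partial_h q(x_3)=0$ at $x_3$, must vanish. The condition $\partial_\varphi q(x_3)=0$ is missing from this list, and that is the main obstacle. I would handle it in one of two ways.

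\begin{enumerate}
\item \emph{Rotation argument.} Let $Z$ be the rotation generator about the $e_3$ axis. Rotations about $e_3$ preserve both the poles and the constraint $\Phi_t=0$, so the column $\partial_{\varphi_3}$ of the Jacobian equals minus the sum of the columns $\partial_{\varphi_j}$ over $j\ge4$. Indeed, the infinitesimal rotation is the tangent vector $\sum_{j\ge3}\partial_{\varphi_j}$, and this vector lies in $\ker \mathrm d\Phi_t$ because the poles, at which $\varphi$ is undefined, are unaffected. The $\partial_{\varphi_3}$ column is therefore a linear combination of the other columns. Deleting it leaves the column span, and hence the rank $D_t$, unchanged. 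This is the clean route, and I would try it first.
\item \emph{Direct check.} If any care is needed with the rotation argument, I would instead verify directly that the matrices $M_r$ stay nonsingular after the row $i^{-1}\partial_\varphi q(x_3)$ is dropped. I expect the rotation argument to make this unnecessary.
\end{enumerate}

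With the Jacobian in the unknowns invertible at $\tau=0$, the analytic implicit function theorem applies, since $\Phi_t$ is real-analytic in $(h,\varphi,\lambda,\tau)$ near the configuration. It gives $\varepsilon_t>0$ and neighbourhoods in which the solutions of $\Phi_t=0$ satisfying the gauge~\eqref{eq:local-gauge} are exactly the graph of a real-analytic function of $(\tau_1,\tau_2)$.

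Finally, every sufficiently nearby solution with $x_1=e_3$ and $\varphi_3=0$ has $x_2$ near $-e_3$, so it can be written as in~\eqref{eq:local-gauge} with small $(\tau_1,\tau_2)$. Uniqueness in the implicit function theorem then shows that this solution lies on the constructed family. The weights stay positive near $\Lambda_t^{\mathrm P}$ by continuity.
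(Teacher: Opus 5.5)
Your proof is correct and is essentially the paper's argument: with both poles fixed there are $3N-4=D_t+1$ variables and rank $D_t$, so the rotation about the $e_3$ axis spans the kernel. Your observation that the $\partial_{\varphi_3}$ column lies in the span of the others is the paper's statement that the condition $\varphi_3=0$ eliminates this direction, and the implicit function theorem in $(\tau_1,\tau_2)$ then finishes both proofs.

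Your fallback ``direct check'' would not work as described, but the rotation argument makes it unnecessary. Fixing only $\varphi_3$ breaks the $G$-symmetry, so the conditions no longer split into the blocks $M_r$. Moreover, deleting the $\partial_\varphi$ row from the $M_r$ would drop that condition at all eight nodes of the orbit of $x_3$, not just at $x_3$, and would leave non-square, underdetermined blocks.
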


\begin{proof}
With both poles fixed, the number of variables is
$3N-4=D_t+1$, and the rank is $D_t$ by
Lemmas~\ref{lem:rank4} and~\ref{lem:rank6}. Thus the kernel is
one-dimensional and coincides with the direction of rotations about
the $e_3$ axis. The condition $\varphi_3=0$ eliminates this direction,
so the Jacobian with respect to the remaining $3N-5=D_t$ variables
is nonsingular. The implicit function theorem with parameters
$(\tau_1,\tau_2)$ gives the result.
\end{proof}

\begin{proof}[Proof of Theorem~\ref{thm:main-families}]
Full rank has been established in Lemmas~\ref{lem:rank4} and~\ref{lem:rank6}.
The conditions $x_1=e_3$, $\varphi_3=0$ locally fix the rotational
freedom, so Lemma~\ref{lem:local-chart} gives a two-dimensional
local family up to equivalence under rotations.
\end{proof}

\begin{cor}
For $t=4,6$, any formula from Lemma~\ref{lem:local-chart} sufficiently
close to Popov's formula with $(\tau_1,\tau_2)\ne(0,0)$ contains no
antipodal pairs, and its weights are not all equal. For $t=6$,
its degree of exactness is exactly $6$.
\end{cor}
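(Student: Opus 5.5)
The corollary makes three claims about the formulas of Lemma~\ref{lem:local-chart} with small $(\tau_1,\tau_2)\ne(0,0)$: no antipodal pairs, weights not all equal, and, for $t=6$, exactness of degree exactly $6$. The plan is to exploit the local uniqueness in Lemma~\ref{lem:local-chart}. Any of these degenerate properties forces the formula to have extra symmetry or to satisfy extra equations. Combined with uniqueness of the local solution, this should pin $(\tau_1,\tau_2)$ to $(0,0)$. If that route stalls, we fall back to a direct perturbation argument.

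\emph{No antipodal pairs.} In Popov's formula the only antipodal pair is the two poles $x_1,x_2$. Indeed, $G^4x(h,\varphi)=x(h,\varphi+\pi)$, and the antipode of $x(h,\varphi)$ is $x(-h,\varphi+\pi)$. On the antiprism with $h\ne0$, the node $G^kx_3$ has height $(-1)^kh$ and azimuth $k\pi/4$. Its antipode would need height $(-1)^{k+1}h$ and azimuth $k\pi/4+\pi$. The node at that azimuth is $G^{k+4}x_3$, with height $(-1)^kh$, so the antipode is not a node. For $t=6$ we also compare across the two antiprisms: their heights $h_+\ne h_-$ differ, so no cross pairs occur either.

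Since ``$x_i=-x_j$'' is a closed condition and all other pairs are bounded away from antipodality, only the pair $(x_1,x_2)$ needs attention under small deformations. But $x_1=e_3$ and $x_2=(\tau_1,\tau_2,-\sqrt{\cdot})$ are antipodal iff $\tau=0$. This part is immediate.

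\emph{Weights not all equal.} Popov's weights already take different values, $1/12\ne5/48$ (and similarly for $t=6$). By continuity the weights stay unequal for small $\tau$. This part is also immediate.

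\emph{Exact degree $6$ for $t=6$.} We must rule out exactness in degree $7$. Since $N=18<N_{\mathrm{wt}}$-type bounds for $t=7$ (the Fisher bound $(m+1)(m+2)=20$ with $m=3$), no positive formula of degree $7$ with $18$ nodes exists. So every formula in the family has degree exactly $6$. No uniqueness argument is needed here; the bound from the introduction suffices.

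The main, though mild, obstacle is the careful enumeration of antipodal candidates in the unperturbed configuration for $t=6$. We handle it by the height comparison described above.
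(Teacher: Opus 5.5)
Your treatment of the first two claims matches the paper's. In Popov's formula the poles are the only antipodal pair. All other $|x_i+x_j|$ are bounded away from $0$ and stay so under small deformations. The gauge~\eqref{eq:local-gauge} makes $x_1=-x_2$ equivalent to $(\tau_1,\tau_2)=(0,0)$, and unequal weights stay unequal by continuity. Your enumeration of candidate pairs just spells out what the paper asserts in one line. The opening plan via local uniqueness and extra symmetry is never used and can be dropped.

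For the third claim you take a genuinely different route. The paper uses the explicit polynomial $p_7(\xi)=\xi_3^3\operatorname{Re}((\xi_1+i\xi_2)^4)$: its integral vanishes, its cubature sum at Popov's formula is $16\sqrt3/735\ne0$, and continuity finishes. You instead apply the Fisher-type bound $N\ge(m+1)(m+2)=20$ for positive formulas of degree $2m+1=7$. This is valid and proves more, because it is global. It shows that no positive $18$-node formula of degree $6$ is exact in degree $7$, with no smallness assumption on $(\tau_1,\tau_2)$. Likewise, $N\ge12$ for degree $5$ shows that positive $10$-node formulas have exact degree $4$.

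You should state explicitly that the weights of the local family are positive (by continuity from Popov's), since the bound needs this.

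The cost is reliance on the Delsarte--Goethals--Seidel bound, which the paper only quotes. It is easily made self-contained by the energy argument of Section~\ref{app:min6}. If the formula were exact in degree $7$, then $f(u)=(1+u)(7u^3-3u)^2\ge0$ on $[-1,1]$ would give $8/5=\mathcal E(f)\ge f(1)\sum_j\lambda_j^2\ge 32/N$, that is, $N\ge20$. The paper's route needs only a single explicit evaluation but gives only the local statement.
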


\begin{proof}
In Popov's formula, the poles form the only antipodal pair, while
all other quantities $|x_i+x_j|$ are positive. These properties,
as well as the inequality of some of the weights, persist under small
deformations. Moreover, by~\eqref{eq:local-gauge}, the equality
$x_1=-x_2$ is possible only when $(\tau_1,\tau_2)=(0,0)$.

For $t=6$, the polynomial
$p_7(\xi)=\xi_3^3\operatorname{Re}((\xi_1+i\xi_2)^4)$ has
zero integral, whereas its cubature sum in Popov's formula equals
\[
8\lambda_+h_+(1-h_+^2)^2(h_+^2-h_-^2) =\frac{16\sqrt3}{735}>0.
\]
By continuity, the sum remains nonzero in a sufficiently small
neighborhood.
\end{proof}
\section{A one-parameter family of degree four}\label{sec:one-parameter}

We prove Corollary~\ref{cor:one-parameter-4}. Consider formulas of the form
\begin{equation}\label{eq:4-family-nodes}
(X,\Lambda)=\bigcup_{j=1}^5\mathcal O_R\bigl(x(h_j,\varphi_j);\lambda_j\bigr),
\quad -1<h_j<1,
\end{equation}
where all ten nodes are distinct. The axis of the half-turn $R$ need not
coincide with the polar axis in the original representation of Popov's
formula. We first derive necessary relations for the heights and weights,
and then construct the formulas on the entire interval $0\le a\le a'$.

\begin{lem}
For a positive formula~\eqref{eq:4-family-nodes} of degree $4$, we have
\begin{equation}\label{eq:4-weight}
\lambda_j=\lambda(h_j),\quad
\lambda(h)=\frac1{30h^4-24h^2+12}.
\end{equation}
\end{lem}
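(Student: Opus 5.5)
The plan is to use the rank-one structure of $C$ from Lemma~\ref{lem:projectors} together with the half-turn symmetry. The first step is to pair each node with its $R$-image. For $x=x(h,\varphi)$ we have $Rx=x(h,\varphi+\pi)$, so
\[
x\cdot Rx=-(1-h^2)+h^2=2h^2-1 .
\]
Substituting $u=2h^2-1$ into $K_2(u)=(15u^2+6u-3)/2$ gives
\[
K_2(2h^2-1)=30h^4-24h^2+3 .
\]
The claim is therefore equivalent to $\lambda_j^{-1}-9=K_2(x_j\cdot Rx_j)$. Relation~\eqref{eq:4-basic} applied to the pair $x_j,Rx_j$, which carry the same weight, gives only $(\lambda_j^{-1}-9)^2=K_2^2$. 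So the whole task is to fix the sign.

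To fix the sign, I use that $C$ is positive semidefinite of rank $1$, so $C=vv^T$ for some real vector $v\ne0$ with $v_i^2=\lambda_i^{-1}-9$. Let $P$ be the permutation of nodes induced by $R$. Since $R$ preserves inner products and weights, $PCP^T=C$. Hence $(Pv)(Pv)^T=vv^T$, and so $Pv=\pm v$.

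If $Pv=-v$, then for the node $i'$ with $x_{i'}=Rx_i$ we get
\[
-K_2(x_i\cdot Rx_i)=C_{ii'}=v_iv_{i'}=-v_i^2=-(\lambda_i^{-1}-9),
\]
which is exactly~\eqref{eq:4-weight}. It therefore remains to exclude $Pv=v$. This is the main step.

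To exclude $Pv=v$, I interpret $v$ through the projector structure. The vector $w=\operatorname{diag}(\lambda_i^{1/2})v$ spans the range of $B$, so it is orthogonal to the columns of $V$. Equivalently,
\[
\sum_i\lambda_iv_i\,p(x_i)=0\quad\text{for all } p\in\Pi_2(\mathbb S^2).
\]
Since $V^TV=I_{D_2}$, the evaluation map $\Pi_2(\mathbb S^2)\to\mathbb R^{X}$ is injective. It sends $R$-invariant polynomials to $R$-invariant functions on $X$. The $R$-invariant part of $\Pi_2$ is spanned by $1,\xi_3,\xi_3^2,\xi_1^2-\xi_2^2,\xi_1\xi_2$, so it has dimension $5$. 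The space of $R$-invariant functions on $X$ also has dimension $5$, one value per orbit. Hence this restricted evaluation map is a bijection.

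Now suppose $Pv=v$. Then the function $i\mapsto\lambda_iv_i$ is $R$-invariant, so it is the restriction of some $R$-invariant $p\in\Pi_2$. Taking this $p$ in the identity above gives $\sum_i\lambda_i^2v_i^2=0$. Since all $\lambda_i>0$, this forces $v=0$, contradicting $\operatorname{rank}C=1$. Therefore $Pv=-v$, and the formula $\lambda_j=\lambda(h_j)$ follows for every $j$.
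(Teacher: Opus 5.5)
Your proof is correct and follows essentially the same route as the paper. In both, the one-dimensional orthogonal complement of the range of $V$ must be $R$-odd, because the $R$-even part of $\Pi_2(\mathbb S^2)$ (dimension $5$) already fills the $R$-even subspace of $\mathbb R^{10}$, and reading off the entry for the pair $x_j,Rx_j$ then gives $\lambda_jK_2(2h_j^2-1)=1-9\lambda_j$; you work with $C=vv^T$ and spell out the steps $Pv=\pm v$ and the dimension count, which the paper states tersely via $B=bb^T$ and the equivariance of $V$.
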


\begin{proof}
By Lemma~\ref{lem:projectors}, $B=bb^T$, where $|b|=1$. The permutation of the
nodes induced by $R$ has even and odd subspaces, each of dimension $5$, in
$\mathbb R^{10}$. In $\Pi_2(\mathbb S^2)$, these dimensions are $5$ and $4$:
the even subspace is spanned by $1$, $\xi_3$, $\xi_3^2$, $\xi_1^2-\xi_2^2$, and
$\xi_1\xi_2$. The isometric embedding $V$ is equivariant under $R$, so its
one-dimensional orthogonal complement is odd. Thus the components of $b$ are
opposite within each pair. For a pair at height $h_j$, relation~\eqref{eq:ABC}
gives
\[
\lambda_jK_2(2h_j^2-1)=1-9\lambda_j.
\]
Substituting $K_2(2h^2-1)=30h^4-24h^2+3$, we obtain~\eqref{eq:4-weight}.
\end{proof}

\begin{lem}
If the heights of a positive formula~\eqref{eq:4-family-nodes} of
degree $4$ are distinct, then their monic polynomial has the form
\[
\omega_a(u)=u^5+\frac{25}{2}\,au^4-\frac45\,u^3-8au^2+\frac{2}{25}\,u+a
\]
for some $a\in\mathbb R$.
\end{lem}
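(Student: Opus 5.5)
The plan is to use only the $R$-invariant zonal moments together with the weight formula~\eqref{eq:4-weight}, and to turn the resulting conditions into a polynomial identity. Each pair $\mathcal O_R(x(h_j,\varphi_j))$ consists of two nodes at the same height $h_j$. Hence exactness on the polynomials $h^k$, $k=0,\dots,4$, which lie in $\Pi_4(\mathbb S^2)$, gives
\[
\sum_{j=1}^5 2\lambda_j h_j^k=\frac12\int_{-1}^1u^k\,du,\quad k=0,\ldots,4.
\]
Let $\omega(u)=\prod_{j=1}^5(u-h_j)$. Since the $h_j$ are distinct and there are five of them, these five conditions state that $u\mapsto\sum_j2\lambda_j\delta_{h_j}$ is the \emph{interpolatory} quadrature for $\tfrac12\,du$ on $[-1,1]$ with nodes $h_j$. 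Consequently
\[
2\lambda_j=\frac{P(h_j)}{2\,\omega'(h_j)},\qquad
P(x)=\int_{-1}^1\frac{\omega(u)-\omega(x)}{u-x}\,du .
\]
Here $P$ is a polynomial of degree $4$ whose coefficients are explicit linear combinations of the coefficients of $\omega$.

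Next I combine this with~\eqref{eq:4-weight}. Put $Q(h)=30h^4-24h^2+12$, so that $\lambda_j=1/Q(h_j)$. The previous display becomes
\[
Q(h_j)P(h_j)-4\,\omega'(h_j)=0,\quad j=1,\ldots,5 .
\]
The left-hand side is a polynomial of degree $8$ that vanishes at the five distinct roots of $\omega$. Therefore
\[
Q(x)P(x)-4\omega'(x)=\omega(x)S(x)
\]
for some polynomial $S$ of degree $3$; its leading coefficient is forced by comparing top terms. Writing $\omega(u)=u^5+c_4u^4+c_3u^3+c_2u^2+c_1u+c_0$ and comparing coefficients of $x^0,\dots,x^8$ gives nine equations. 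They are linear in the coefficients of $S$ and quadratic in the $c_i$.

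The step I expect to be the main obstacle is solving this polynomial system and showing that its solution set is exactly the curve
\[
(c_4,c_3,c_2,c_1,c_0)=\Bigl(\tfrac{25}{2}a,\,-\tfrac45,\,-8a,\,\tfrac2{25},\,a\Bigr).
\]
My approach is to eliminate the coefficients of $S$ from the top equations first, using linear algebra. Then I would use the symmetry $h\mapsto-h$, which maps solutions to solutions with $a\mapsto-a$, to split the remaining relations into even and odd parts. I expect the even part to force $c_3=-4/5$ and $c_1=2/25$. I expect the odd part to force $c_4$ and $c_2$ to be the multiples $\tfrac{25}{2}c_0$ and $-8c_0$ of the free parameter $a=c_0$. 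As a sanity check, at $a=0$ the roots are $0$ and $\pm\sqrt{(2\pm\sqrt2)/5}$, which should be the heights of Popov's formula seen along a half-turn axis through two of its nodes.

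If the elimination leaves spurious branches, I would exclude them using further moment conditions. One option is the $R$-invariant non-zonal conditions $\sum_j\lambda_j(1-h_j^2)h_j^ke^{2i\varphi_j}=0$. Another is the positivity requirement $\lambda_j>0$ together with $|h_j|<1$. I do not expect these to be needed, but they are the fallback.
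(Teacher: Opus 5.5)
Your proposal is correct, and it takes a genuinely different route from the paper.

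\textbf{The paper's route.} It never uses the zonal moments in this lemma. It takes the odd rank-one defect vector $b$ from the weight lemma and writes, for each pair of orbits, $K_2(yz+d)+K_2(yz-d)=0$ and $K_2(yz+d)^2=K_2(2y^2-1)K_2(2z^2-1)$. It then eliminates the azimuthal quantity $d$ to get a symmetric relation $R_4(y,z)=0$ between any two heights. Finally it reads $\omega$ off the $z^3$-coefficient of $R_4(h_i,z)=L(h_i)\,\omega(z)/(z-h_i)$.

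\textbf{Your route.} Beyond the weight formula, you use only exactness on $1,\xi_3,\dots,\xi_3^4$. The step you flagged as the main obstacle is in fact trivial. Comparing the coefficients of $x^7,x^6,x^5$ gives $S(x)=60x^3-28x$ outright, independently of the $c_i$. The $x^4$ comparison is then an identity. The coefficients of $x^3,x^2,x^1,x^0$ give the linear system
\[
2c_4+5c_2+15c_0=0,\quad c_3+5c_1=-\tfrac25,\quad 2c_4+4c_2+7c_0=0,\quad 2c_3+5c_1=-\tfrac65 .
\]
Its solution set is exactly $c_3=-\tfrac45$, $c_1=\tfrac2{25}$, $c_2=-8c_0$, $c_4=\tfrac{25}2c_0$. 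So there are no spurious branches, and your fallback via non-zonal conditions or positivity is never needed.

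\textbf{What your route buys.} It is shorter and purely linear. It also unifies the two directions. Your identity $QP-4\omega'=(60x^3-28x)\,\omega$ is precisely the paper's $QJ_a-2\omega_a'=(30h^3-14h)\,\omega_a$ with $P=2J_a$. Hence the same computation, read backwards, is the moment verification in Lemma~\ref{lem:height-moments}. Together: for distinct heights and $\lambda_j=1/Q(h_j)$, the zonal moments hold if and only if $\omega=\omega_a$.

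\textbf{What the paper's route buys.} It stays inside the rank/defect-vector framework of Section~\ref{sec:auxiliary}. It produces a pairwise relation $R_4(y,z)=0$ valid for any two orbits, in the spirit of the Bondarenko--Gorbachev relation, and it needs no quadrature computation.

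One small correction to your sanity check, which is not part of the proof. Popov's half-turn axis $(\cos(\pi/8),\sin(\pi/8),0)$ passes through no node: a node on the axis would be fixed by $R$ and have height $\pm1$. The root $0$ of $\omega_0$ comes from the two poles, which this half-turn swaps.
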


\begin{proof}
For two different orbits, put
\[
y=h_i,\quad z=h_j,\quad d=\sqrt{(1-y^2)(1-z^2)}\cos(\varphi_i-\varphi_j).
\]
Their inner products are $yz\pm d$. The oddness of the defect vector
from the preceding proof gives
\[
K_2(yz+d)+K_2(yz-d)=0,\quad
K_2(yz+d)^2=K_2(2y^2-1)K_2(2z^2-1).
\]
The first equality means that $d^2=(1-2yz-5y^2z^2)/5$.
Eliminating $d$ from the second yields
\begin{multline*}
R_4(y,z)=625y^4z^4-400(y^4z^2+y^2z^4)+50(y^4+z^4) +100y^3z^3+320y^2z^2\\
{}-40(y^2+z^2)-8yz+4=0.
\end{multline*}
Let $\omega(u)=u^5+bu^4+\ldots$ be the polynomial of the heights.
For $y=h_i$, the other four heights are roots of $R_4(y,z)$, so
\[
R_4(y,z)=L(y)\,\frac{\omega(z)}{z-y},\quad L(y)=625y^4-400y^2+50.
\]
Here $L(y)\ne0$: otherwise $R_4(y,z)$ would be the zero polynomial
of degree at most $3$, which is impossible since $L(0)=50$ and the
coefficient of $z^3$ is $100y^3\ne0$. Comparing coefficients of $z^3$
gives $L(h_i)(b+h_i)=100h_i^3$. Consequently,
\[
\omega(u)=\frac{L(u)(b+u)-100u^3}{625}
=u^5+bu^4-\frac45\,u^3-\frac{16}{25}\,bu^2+\frac2{25}\,u+\frac2{25}b.
\]
It remains to put $a=2b/25$.
\end{proof}

\begin{lem}\label{lem:height-moments}
For $0\le a\le a'$, the polynomial $\omega_a$ has five roots
$h_1,\ldots,h_5\in(-1,1)$, counted with multiplicity. For $a<a'$ the
roots are simple, while at $a=a'$ one root is double. The weights
in~\eqref{eq:4-weight} satisfy
\begin{equation}\label{eq:height-moments}
2\sum_{j=1}^5\lambda_jh_j^k=\frac12\int_{-1}^1u^k\,du,\quad k=0,1,2,3,4.
\end{equation}
\end{lem}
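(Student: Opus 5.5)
The plan is to treat the two assertions separately: first the location and multiplicity of the roots of $\omega_a$, then the moment identities~\eqref{eq:height-moments}, which I will verify as polynomial identities in $a$.

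\emph{Roots.} Write $\omega_a=P+aQ$ with the odd and even parts
\[
P(u)=u^5-\tfrac45u^3+\tfrac2{25}u,\qquad Q(u)=\tfrac{25}{2}u^4-8u^2+1 .
\]
For $a=0$ the roots are $0$ and $\pm\sqrt{(2\pm\sqrt2)/5}$. These are five simple roots in $(-1,1)$.

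I would then show that as $a$ increases, no root leaves $(-1,1)$ and no root becomes complex before $a'$. At the endpoints,
\[
\omega_a(1)=\tfrac{7}{25}+\tfrac{11}{2}a>0,\qquad \omega_a(-1)=-\tfrac{7}{25}+\tfrac{11}{2}a<0 \quad\text{for } a<\tfrac{14}{275}.
\]
Numerically $a'\approx0.0154<14/275$, so no root reaches $\pm1$ on $[0,a']$.

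By continuity, the five real roots persist until two of them collide. The real roots of $\omega_a$ are exactly the solutions of $g(u)=a$, where $g=-P/Q$. Hence a collision occurs precisely at a critical value of $g$. The critical points satisfy $P'Q-PQ'=0$, which is an even polynomial of degree $8$, i.e.\ a quartic in $v=u^2$. I would solve it explicitly; the form of $a'$ suggests a root $v$ of the shape $c(4-\sqrt6)$. Then I would check two things:
\begin{itemize}
\item the smallest positive critical value of $g$ on $(-1,1)$ is $a'=(4-\sqrt6)^{3/2}/125$;
\item at this value exactly one double root appears, i.e.\ $\omega''\ne0$ there and no other pair collides simultaneously.
\end{itemize}
Equivalently, one could compute $\operatorname{disc}_u\omega_a$ as a polynomial in $a$ and identify its smallest positive root. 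The factor $(4-\sqrt6)^{3/2}$ indicates that $a'^2$ is rational in $\sqrt6$.

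\emph{Moments.} For each $k\le4$, I would reduce $u^k\lambda(u)$ modulo $\omega_a$. Since $\gcd(30u^4-24u^2+12,\omega_a)=1$, which must be checked for $a\in[0,a']$, there is a polynomial $S_k$ of degree $\le4$ with
\[
S_k(u)\,(30u^4-24u^2+12)\equiv u^k \pmod{\omega_a}.
\]
Then $\sum_j\lambda_jh_j^k=\sum_jS_k(h_j)$, and this sum is expressible through the Newton power sums $p_0,\dots,p_4$ of the roots. These power sums are polynomials in $a$ obtained from the coefficients of $\omega_a$. One then checks that the result equals $1/4\cdot\int_{-1}^1u^k\,du$, that is, $1/2,\,0,\,1/6,\,0,\,1/10$, identically in $a$.

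A cleaner alternative is residue calculus. Since $\deg(u^k\omega_a')\le8<9=\deg\bigl(\omega_a\cdot(30u^4-24u^2+12)\bigr)$,
\[
\sum_j\lambda(h_j)h_j^k=-\sum_{\rho}\operatorname{Res}_{u=\rho}\frac{u^k\omega_a'(u)}{\omega_a(u)(30u^4-24u^2+12)},
\]
where $\rho$ runs over the four roots of $30u^4-24u^2+12$, and there is no residue at infinity. At such $\rho$ the integrand involves $\omega_a'(\rho)/\omega_a(\rho)$, which is rational in $a$. The symmetry $\rho\mapsto-\rho$ combined with $P$ odd and $Q$ even should make the odd moments cancel and produce the even ones.

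The double-root case $a=a'$ follows by continuity, since both sides are continuous in $a$ when roots are counted with multiplicity. I expect the main obstacle to be the root analysis: verifying that $a'$ is the first critical value and that nothing degenerates earlier. The moment identities are routine but heavy symbolic computation.
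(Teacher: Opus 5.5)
Your plan is correct, though it proves the moment identities by a genuinely different route from the paper.

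\textbf{Roots.} Your critical-value argument is an equivalent reformulation of the paper's discriminant argument. The paper factors $\operatorname{disc}\omega_a$ in $a$ instead, and your version works out explicitly. With $w=25u^2$, the polynomial $P'Q-PQ'$ is a constant multiple of $(w^2-8w+10)(w^2-20w+250)$. Hence the real critical points of $g$ are $\pm\sqrt{4\pm\sqrt6}/5$, and at these points $g(u)=-u^3$, giving critical values $\mp(4\pm\sqrt6)^{3/2}/125$. Only $u=-\sqrt{4-\sqrt6}/5$ has value $a'$. It is a simple zero of $P'Q-PQ'$, so the root there is exactly double. You should also state that $P$ and $Q$ have no common zero, which is what justifies $\omega_a(u)=0\iff g(u)=a$.

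\textbf{Moments.} The paper proves a single identity,
$\lambda(h)^{-1}J_a(h)-2\omega_a'(h)=(30h^3-14h)\,\omega_a(h)$, where $J_a(h)=\frac12\int_{-1}^1\frac{\omega_a(u)-\omega_a(h)}{u-h}\,du$. This says that the $2\lambda_j$ are exactly the interpolatory weights of the five-node rule with nodes $h_j$. Exactness for every $k\le4$ then follows automatically. Your modular-inversion and Newton-sum route is valid. The gcd condition is automatic, since the roots of $\omega_a$ are real for these $a$ and $30u^4-24u^2+12>0$ on $\mathbb R$. But it is a heavy verification of rational identities in $a$ that does not explain why they hold.

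\textbf{The residue alternative has an error.} For $k=4$ the numerator has degree $8$ and the denominator degree $9$. The integrand therefore behaves like $1/(6u)$ at infinity, and the residue at infinity is $-1/6$, not $0$. It vanishes only when the degree gap is at least $2$, i.e.\ for $k\le3$. Uncorrected, your formula would give $\sum_j\lambda_jh_j^4=-1/15$.

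Once corrected, this route is clean and closely tied to the paper. Evaluate the paper's identity at a zero $\rho$ of $30u^4-24u^2+12$: it gives $\omega_a'(\rho)/\omega_a(\rho)=-(15\rho^3-7\rho)$, which is independent of $a$. So the finite residues are constants, and the odd moments cancel under $\rho\mapsto-\rho$. Adding the infinity contribution, you get $\frac12$, $\frac16$ and $\frac16-\frac1{15}=\frac1{10}$ for $k=0,2,4$, as required.
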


\begin{proof}
For $a=0$, the roots are
\[
0,\quad \pm\sqrt{\frac{2+\sqrt2}{5}},\quad \pm\sqrt{\frac{2-\sqrt2}{5}}.
\]
The discriminant of $\omega_a$ is
\[
\frac8{5^{10}}\,(1953125a^4-34000a^2+8) (3828125a^4-4000a^2+128).
\]
The first positive zero of the discriminant is $a'$, while the second
factor is positive on $\mathbb R$, since its discriminant as a quadratic
polynomial in $a^2$ is negative. Moreover,
\[
\omega_a(1)=7/25+11a/2>0,\quad \omega_a(-1)=-7/25+11a/2<0,\quad a\in [0,a'].
\]
Thus the roots remain real and cannot leave $(-1,1)$. At $a=a'$,
the double root is $-\sqrt{4-\sqrt6}/5$. The fact that the
discriminant has a simple zero rules out any other multiple roots.

To verify the moment conditions, put
\[
Q(h)=\lambda(h)^{-1},\quad
J_a(h)=h^4+\frac{25}{2}ah^3-\frac7{15}h^2-\frac{23}{6}ah+\frac1{75}.
\]
A direct calculation gives
\[
Q(h)J_a(h)-2\omega_a'(h)=(30h^3-14h)\omega_a(h).
\]
For a simple root $h_j$, we also have
\[
J_a(h_j)=\frac12\int_{-1}^1\frac{\omega_a(u)}{u-h_j}\,du.
\]
Hence the interpolatory weight at $h_j$ equals
$J_a(h_j)/\omega_a'(h_j)=2\lambda_j$. This proves~\eqref{eq:height-moments}
for $a<a'$, and the case $a=a'$ follows by continuity. The weights are
positive because $Q(h)=30(h^2-2/5)^2+36/5>0$.
\end{proof}

\begin{lem}\label{lem:phase-completion}
For the heights and weights from Lemma~\ref{lem:height-moments},
there exist $\eta_1,\ldots,\eta_5\in\mathbb S^1$ such that, with
$\rho_j=\sqrt{1-h_j^2}$,
\begin{equation}\label{eq:phase4}
\sum_{j=1}^5\lambda_j\rho_j^2h_j^k\eta_j=0,\quad k=0,1,2,\quad
\sum_{j=1}^5\lambda_j\rho_j^4\eta_j^2=0.
\end{equation}
All such choices are obtained from any one of them by a common rotation
and, possibly, complex conjugation.
\end{lem}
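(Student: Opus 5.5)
The plan is to parametrize the solutions of the three linear equations in \eqref{eq:phase4} explicitly, then impose $|\eta_j|=1$, and finally impose the quadratic equation. Throughout, $a<a'$, so that the $h_j$ are distinct; the endpoint $a=a'$ is handled at the end.

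\emph{Step 1: the linear equations.} Put $c_j=\lambda_j\rho_j^2\eta_j$. The equations $\sum_j c_jh_j^k=0$, $k=0,1,2$, form a Vandermonde system of rank $3$ in $\mathbb C^5$. Since $\sum_j p(h_j)/\omega_a'(h_j)=0$ whenever $\deg p\le3$, every solution has the form
\[
c_j=\frac{\alpha+\beta h_j}{\omega_a'(h_j)},\qquad \alpha,\beta\in\mathbb C,
\]
and every such vector is a solution.

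\emph{Step 2: the unimodularity conditions.} The conditions $|\eta_j|=1$ become
\[
|\alpha|^2+2\operatorname{Re}(\alpha\bar\beta)h_j+|\beta|^2h_j^2=\lambda_j^2\rho_j^4\,\omega_a'(h_j)^2,\qquad j=1,\ldots,5.
\]
From the identity $Q J_a-2\omega_a'=(30h^3-14h)\omega_a$ in the proof of Lemma~\ref{lem:height-moments}, we get $\lambda_j\omega_a'(h_j)=J_a(h_j)/2$ at each root. Hence the right-hand side equals $g(h_j)$, where
\[
g(h)=\tfrac14(1-h^2)^2J_a(h)^2 .
\]
The key computation is that the remainder of $g$ modulo $\omega_a$ is a polynomial $c_0+c_1h+c_2h^2$ of degree at most $2$, with coefficients polynomial in $a$. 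This is a finite polynomial division that I would carry out symbolically in $a$. I would then check $c_0,c_2>0$ and $c_1^2<4c_0c_2$ on $[0,a')$. These conditions follow from positivity of $g$ at five distinct points once the remainder is quadratic, but I would verify them directly. Given this, the unknowns are determined as
\[
|\alpha|^2=c_0,\qquad |\beta|^2=c_2,\qquad \operatorname{Re}(\alpha\bar\beta)=c_1/2 .
\]
So $\beta=t\alpha$, where $t$ is one of two complex-conjugate numbers $t_0$, $\bar t_0$ with $|t_0|^2=c_2/c_0$ and $\operatorname{Re}t_0=c_1/(2c_0)$. The remaining freedom is the common phase of $\alpha$, which is exactly a common rotation of the $\eta_j$.

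\emph{Step 3: the quadratic equation.} Using $\lambda_j\rho_j^4\eta_j^2=c_j^2/\lambda_j$, the last equation of \eqref{eq:phase4} reads
\[
\sum_j\frac{(\alpha+\beta h_j)^2\,Q(h_j)}{\omega_a'(h_j)^2}=0 .
\]
Replace $1/\omega_a'(h_j)$ by $2\lambda_j/J_a(h_j)$. Since $J_a$ and $\omega_a$ have no common roots, there is a polynomial $P$ with $P J_a\equiv1\pmod{\omega_a}$, and the sum becomes $\sum_j r(h_j)/\omega_a'(h_j)$ for a polynomial $r$. This sum equals the coefficient of $h^4$ in $r\bmod\omega_a$. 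One obtains a form $A\alpha^2+2B\alpha\beta+C\beta^2$ with $A,B,C$ real; up to the nonzero factor $\alpha^2$, the condition is
\[
A+2Bt+Ct^2=0 .
\]
A real quadratic has conjugate roots, so it suffices to verify the polynomial identities in $a$
\[
\frac{A}{C}=\frac{c_2}{c_0},\qquad -\frac{B}{C}=\frac{c_1}{2c_0},
\]
together with negativity of its discriminant. These give existence, and uniqueness up to a common rotation and conjugation ($t=t_0$ or $\bar t_0$).

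\emph{Endpoint and main obstacle.} For $a=a'$ I would pass to a limit, using compactness of $(\mathbb S^1)^5$ and continuity of the equations; for the uniqueness claim there I would redo Step~1 with a confluent Vandermonde system. The main obstacle is Steps~2--3: showing that $g\bmod\omega_a$ drops to degree $2$, and that the quadratic equation is automatically compatible with it. Both are rigid algebraic identities in $a$, and I would first try to prove them by exact symbolic reduction modulo $\omega_a$, hoping they follow from the relation $R_4$ underlying the definition of $\omega_a$.
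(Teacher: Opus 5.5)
Your route (parametrize the kernel of the Vandermonde system, then impose $|\eta_j|=1$ and the quadratic equation) differs from the paper's, and it is viable for $a<a'$. Step~1 is correct, and both identities you defer are in fact true. At $a=0$, for instance, $g(h_j)=(1+20h_j^2)/22500$ at all five roots, so $c_1=0$ and $t_0=2\sqrt5\,i$.

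The gap is that the whole content of the lemma is left to symbolic reductions you have not performed, and you give no reason why they should hold. Your fallback remark is also false: a quadratic positive at five points need not satisfy $c_0,c_2>0$, $c_1^2<4c_0c_2$, since it can dip below zero between the nodes.

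The missing idea is the paper's whole proof, and it does not involve $R_4$ at all.
\begin{itemize}
\item The rows $\sqrt{2\lambda_j}\bigl(1,\sqrt3h_j,\tfrac{\sqrt5}{2}(3h_j^2-1)\bigr)$ form a matrix $M$ with $M^TM=I_3$, by \eqref{eq:height-moments}.
\item The one-variable identity $\lambda(h)^{-1}-2\bigl(1+3h^2+\tfrac54(3h^2-1)^2\bigr)=\tfrac{15}{2}(1-h^2)^2$ says that the projector $I_5-MM^T$ onto the $2$-dimensional complement has diagonal entries $\tfrac{15}{2}\lambda_j\rho_j^4$.
\item Take $Z$ an orthonormal basis of the complement and set $\eta_j=(Z_{j1}+iZ_{j2})/(\sqrt{15\lambda_j/2}\,\rho_j^2)$. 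The linear equations are then $M^TZ=0$, the quadratic one is $Z^TZ=I_2$, and uniqueness is the $O(2)$ freedom in $Z$.
\end{itemize}

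In your notation, the complement is spanned by $w^{(k)}_j=h_j^k/\bigl(\sqrt{\lambda_j}\,\omega_a'(h_j)\bigr)$, $k=0,1$. The diagonal identity is exactly your Step~2, with $c_0+c_1h+c_2h^2=\tfrac{2}{15}(1,h)\,G^{-1}(1,h)^T$, where $G$ is the Gram matrix of $w^{(0)},w^{(1)}$. This quadratic is automatically positive definite.

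Step~3 is then redundant. A symmetric operator on the complement is determined by its diagonal, because a quadratic vanishing at five distinct points is zero. So the linear equations plus $|\eta_j|=1$ already force $ZZ^T=I_5-MM^T$, and hence $Z^TZ=I_2$.

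The endpoint $a=a'$ is a second, separate gap. There the double root $h^*$ satisfies $\omega_{a'}'(h^*)=0$. Because $Q>0$ in $QJ_a-2\omega_a'=(30h^3-14h)\omega_a$, also $J_{a'}(h^*)=0$. So the right side of your Step~2 equation vanishes at $h^*$, which forces $\alpha+\beta h^*=0$ and makes $c_4,c_5$ of the form $0/0$.

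The system at $a'$ is not confluent. It has two equal columns, and its kernel contains $e_4-e_5$. Interchanging the two nodes of equal height is an extra symmetry of the equations. You would have to show it is a common rotation, possibly composed with conjugation.

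Compactness does give existence at $a'$, but your plan does not cover the uniqueness assertion there. The paper's argument needs no distinctness of heights: $M^TM=I_3$ persists at $a'$ by continuity, the complement is still $2$-dimensional, and the $O(2)$ argument applies verbatim.
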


\begin{proof}
Form the matrix $M\in\mathbb R^{5\times3}$ with rows
\[
M_{j,:}=\sqrt{2\lambda_j} \Bigl(1,\sqrt3h_j,\frac{\sqrt5}{2}\,(3h_j^2-1)\Bigr).
\]
By~\eqref{eq:height-moments}, $M^TM=I_3$. Choose an orthonormal
complement $Z\in\mathbb R^{5\times2}$:
\[
M^TZ=0,\quad Z^TZ=I_2,\quad ZZ^T=I_5-MM^T.
\]
The weight formula gives
\[
Z_{j1}^2+Z_{j2}^2 =1-2\lambda_j\Bigl(1+3h_j^2+\frac54\,(3h_j^2-1)^2\Bigr)
=\frac{15}{2}\lambda_j\rho_j^4.
\]
We may therefore set
\[
\eta_j=\frac{Z_{j1}+iZ_{j2}}{\sqrt{15\lambda_j/2}\,\rho_j^2}.
\]
The first three equalities in~\eqref{eq:phase4} follow from
$M^TZ=0$, and the last one from $Z^TZ=I_2$.
Conversely, any collection $\eta_j$ satisfying~\eqref{eq:phase4}
defines, by this formula, an orthonormal complement $Z$; here we use
$(15/2)\sum_j\lambda_j\rho_j^4=2$, which follows from
\eqref{eq:height-moments}. Two such complements differ by a
transformation $Z\mapsto ZU$, $U\in O(2)$, giving the stated uniqueness.
\end{proof}

\begin{proof}[Proof of Corollary~\ref{cor:one-parameter-4}]
Choose $\eta_j=e^{2i\varphi_j}$ by Lemma~\ref{lem:phase-completion}
and define a formula by the weighted orbits~\eqref{eq:4-family-nodes}.
Odd azimuthal frequencies vanish because of the half-turn.
Frequency $0$ is integrated exactly by~\eqref{eq:height-moments},
while frequencies $\pm2,\,\pm4$ are handled by~\eqref{eq:phase4}
and its complex conjugates. Thus the formula is exact on
$\Pi_4(\mathbb S^2)$.

All nodes are distinct, including when two heights coincide at the
endpoint of the interval. Indeed, coincidence of two orbits would require
$h_i=h_j$ and $\eta_i=\eta_j$. Then two rows of the orthogonal matrix
$(M\ Z)$ would coincide, which is impossible. The roots can be numbered
continuously in $a$, and analytically for $a<a'$. The orthonormal
complement $Z$ can also be chosen continuously on the interval and
locally analytically for $a<a'$. A continuous choice of arguments of
$\eta_j$ gives the required family. Minimality follows from
$N_{\mathrm{wt}}(4)=10$ (see~\cite{BondarenkoGorbachev}).

For $a=0$, Popov's formula is invariant under the half-turn about the
axis $(\cos(\pi/8),\sin(\pi/8),0)$. Its five pairs have projections
onto this axis equal to the roots of $\omega_0$, with weights given
by~\eqref{eq:4-weight}. Uniqueness in Lemma~\ref{lem:phase-completion}
shows that it agrees with the constructed formula up to a rotation
and possibly a reflection. The latter can be removed using the
reflection symmetry of Popov's formula.

For the other endpoint, put $\delta=\sqrt{4-\sqrt6}$. We have
$\omega_{a'}(1/\delta)=0$ and $\lambda(1/\delta)=1/9$.
The corresponding two nodes $p,q$ satisfy
\[
p\cdot q=\frac2{\delta^2}-1=\frac{\sqrt6-1}{5}.
\]
By Lemma~\ref{lem:projectors}, for $t=4$ the matrix $C$ is positive
semidefinite, with $C_{ii}=\lambda_i^{-1}-9$ and
$C_{ij}=-K_2(x_i\cdot x_j)$ for $i\ne j$.
If the weight of a node is $1/9$, its corresponding diagonal entry
vanishes, and hence the entire row of $C$ vanishes. Therefore, for
any node $x\ne p,q$,
\[
K_2(p\cdot x)=K_2(q\cdot x)=0.
\]
Denote the roots of $K_2$ by $u_\pm=(-1\pm\sqrt6)/5$ and rotate
$p,q$ to $e_3$, $x(u_+,0)$. The other eight nodes are the intersections
of the sphere with pairs of planes $p\cdot x=u_\pm$,
$q\cdot x=u_\pm$. We obtain the following explicit representation
of the endpoint formula:
\[
\begin{array}{c|c|c}
\text{height}&\text{azimuths}&\text{weight of each node}\\ \hline
1&0&1/9\\
u_+&0&1/9\\
u_+&\pm\theta_+&(8+3\sqrt6)/144\\
u_+&\pm\theta_-&(16-\sqrt6)/144\\
u_-&\pi/4+k\pi/2,\ k=0,1,2,3&(16-\sqrt6)/144
\end{array}
\]
where $\theta_\pm\in(0,\pi)$ and
\[
\cos\theta_+=-1+\frac{\sqrt6}{2},\quad
\cos\theta_-=-\frac{6+\sqrt6}{10}.
\]
The weights are obtained from the moments on each latitude.
They are uniquely determined: the factors $(h-u_-)(h-1)$ and
$(h-u_+)(h-1)$ isolate the latitudes, on which interpolation by
trigonometric polynomials of degree $2$ suffices.
This is the Heo--Xu formula~\cite{HeoXu}, with one pole, a square,
and a pentagon. It is symmetric under reflection in the plane
$\xi_2=0$, so here too equivalence under $O(3)$ reduces to
equivalence under rotations.
\end{proof}

Near $a=0$, after fixing rotations, this curve lies in the local
family of Theorem~\ref{thm:main-families}. The entire interval is
constructed separately: the existence of the azimuths follows from
Lemma~\ref{lem:phase-completion}, not merely from the local theorem.
\section{Minimal number of nodes in degree six}\label{app:min6}

We show that every positive cubature formula of degree $6$ has at
least $18$ nodes. Together with Lemma~\ref{lem:popov-exactness},
this gives $N_{\mathrm{wt}}(6)=18$.

Let $(X,\Lambda)$ be a positive formula of degree $6$ with $N$ nodes.
For a polynomial $p$ in one variable, put
\[
\mathcal E(p)=\sum_{i,j=1}^N\lambda_i\lambda_jp(x_i\cdot x_j).
\]
Let $P_l$ denote the Legendre polynomial with $P_l(1)=1$, and put
\[
Z_l(u)=(2l+1)P_l(u),\quad Z_{l,y}(x)=Z_l(x\cdot y).
\]
Since $Z_{l,y}$ is the reproducing kernel for spherical harmonics of
degree $l$, we have
\[
\mathcal E(P_l)=\frac1{2l+1}
\biggl\|\sum_{j=1}^N\lambda_jZ_{l,x_j}\biggr\|_{L^2(\mathbb S^2,\sigma)}^2
\ge0.
\]
Exactness of the formula implies
\begin{equation}\label{eq:kernel-moments6}
\mathcal E(P_0)=1,\quad \mathcal E(P_l)=0,\quad l=1,\ldots,6.
\end{equation}

We also need an upper bound. Let $g=\sum_lg_lP_l$, where $g_l\ge0$.
Then the matrix
\[
M=\bigl(\sqrt{\lambda_i\lambda_j}\,g(x_i\cdot x_j)\bigr)_{i,j=1}^N
\]
is positive semidefinite. By Lemma~\ref{lem:projectors}, the matrix
\[
A=\bigl(\sqrt{\lambda_i\lambda_j}\,K_3(x_i\cdot x_j)\bigr)_{i,j=1}^N
\]
is an orthogonal projector. Hence
\begin{equation}\label{eq:kernel-upper6}
\mathcal E(K_3g)=\operatorname{tr}(AM)
\le\operatorname{tr}M=g(1).
\end{equation}

Choose two polynomials
\[
f(u)=(u+3)\bigl(50u^5+16u^4-33u^3-6u^2+4u+1\bigr)^2,
\]
\[
g(u)=\frac{133}{2}\,P_6(u)+\frac{63}{2}P_8(u).
\]
We have $f\ge0$ on $[-1,1]$, $f(1)=4096$, and $g(1)=98$.
For
\[
r(u)=f(u)-K_3(u)g(u),\quad
K_3(u)=\frac{35u^3+15u^2-15u-3}{2},
\]
the expansion $r=\sum_{l=0}^{11}r_lP_l$ gives
\[
\begin{array}{c|r}
l&r_l\\ \hline
0&7858/55\\
7&-150863/277134\\
8&-32489/81510\\
9&-93928559/510510\\
10&-109375/7106\\
11&-12909175/176358
\end{array}
\]
The coefficients $r_1,\ldots,r_6$ are not needed because
of~\eqref{eq:kernel-moments6}. Since $r_l<0$ for $l=7,\ldots,11$,
we obtain
\[
\mathcal E(r)\le\frac{7858}{55}.
\]
Consequently, by~\eqref{eq:kernel-upper6},
\[
\mathcal E(f)=\mathcal E(r)+\mathcal E(K_3g)
\le\frac{7858}{55}+98=\frac{13248}{55}.
\]
On the other hand, positivity of the weights, nonnegativity of $f$,
and the Cauchy--Schwarz inequality give
\[
\mathcal E(f)\ge f(1)\sum_{j=1}^N\lambda_j^2
\ge\frac{4096}{N}.
\]
Thus
\[
N\ge\frac{4096\cdot55}{13248}=\frac{3520}{207}
=17+\frac1{207}>17.
\]
Hence $N\ge18$. Popov's formula from Lemma~\ref{lem:popov-exactness}
shows that this bound is attained.

\subsection*{Use of artificial intelligence}
Generative artificial intelligence tools were used in preparing this paper.
The problems and preliminary results are the author's own. The author
carried out the final verification and takes responsibility for the
content of the paper.

\end{document}